\numberwithin{equation}{section}
\def\today{\number\day\space\ifcase\month\or   January\or February\or
   March\or April\or May\or June\or   July\or August\or September\or
   October\or November\or December\fi\   \number\year}
\numberwithin{equation}{section}
\newtheorem{theorem}{Theorem}[section]
\newtheorem{lemma}[theorem]{Lemma}
\newtheorem{proposition}[theorem]{Proposition}
\newtheorem{corollary}[theorem]{Corollary}
\newtheorem{definition}[theorem]{Definition}
\newtheorem{remark}[theorem]{Remark}
\newtheorem{example}[theorem]{Example}
\newcommand{\Z}{{\mathbb{Z}}}
\newcommand{\R}{{\mathbb{R}}}
\newcommand{\N}{\mathbb{N}}
\newcommand{\pf}{{\operatorname{pf}}}
\newcommand{\deta}{{\operatorname{det}}}
\newcommand{\K}{\mathrm{K}} % $\K$-theory
\newcommand{\GL}{\mathrm{GL}}
\newcommand{\te}{\theta}
\newcommand{\SL}{\mathrm{SL}}
\newcommand{\Sp}{\mathrm{Sp}}
\newcommand{\Mp}{\mathrm{Mp}}
\DeclareMathOperator{\Aut}{Aut}
\DeclareMathOperator{\Tr}{Tr}
\DeclareMathOperator{\id}{id}
\newcommand{\bn}{\noindent \begin{nummer} \rm}
\newcommand{\en}{\end{nummer}}
\title{Tracing projective modules \\ over noncommutative orbifolds}   % ???
\author[S.~Chakraborty]{Sayan Chakraborty} 
\address{Stat-Math unit, Indian Statistical Institute, 203 Barrackpore Trunk Road, Kolkata 700 108, India.}
\email{sayan2008@gmail.com}
\keywords{Metaplectic transformations, Morita equivalence, noncommutative torus, C*-crossed product, group actions, classification of C*-algebras}
\subjclass[2010]{46L35, 46L55, 46L80}
\begin{document}

\begin{abstract}For an action of a finite cyclic group $F$ on an $n$-dimensional noncommutative torus $A_\theta,$ we give sufficient conditions when the fundamental projective modules over $A_\theta$, which determine the range of the canonical trace on $A_\theta,$ extend to projective modules over the crossed product C*-algebra $A_\theta \rtimes F.$  Our results allow us to understand the range of the canonical trace on $A_\theta \rtimes F$, and determine it completely for several examples including the crossed products of 2-dimensional noncommutative tori with finite cyclic groups and the flip action of $\Z_2$ on any $n$-dimensional noncommutative torus. As an application, for the flip action of $\Z_2$ on a simple $n$-dimensional torus $A_\theta$, we determine the Morita equivalence class of $A_\theta \rtimes \Z_2,$ in terms of the Morita equivalence class of $A_\theta.$
\end{abstract}

\maketitle \pagestyle{myheadings} \markboth{S.~Chakraborty}{Tracing projective modules over noncommutative orbifolds}

\section*{Introduction}\label{sec:intro}

For $n \geq 2$, let $\mathcal{T}_{n}$ denote the space of all $n \times n$ real skew-symmetric  matrices. The $n$-dimensional noncommutative torus $A_{\theta}$ is
the universal C*-algebra  generated by unitaries $U_1$, $U_2$,  $U_3$, $\cdots$, $U_n$
subject to the relations
\begin{equation}\label{eq:ccr}
	U_k U_j = e^ {2 \pi i \theta_{jk} } U_j U_k
\end{equation}
for $j, k = 1, 2, 3, \cdots, n$, where $\theta:=(\theta_{jk}) \in \mathcal{T}_{n}$. For the 2-dimensional noncommutative tori, since $\theta$ is determined by only one real number, $\theta_{12},$ we will denote $\theta_{12}$ by $\theta$ again and the corresponding 2-dimensional noncommutative torus by $A_{\theta}$.

There is a canonical action of $\SL(2,\Z)$ on two dimensional noncommutative tori, which is given by sending $U_1$ to $e^ {\pi i a c\theta_{12}}U_1^aU_2^c$ and $U_2$ to $e^ {\pi i b d\theta_{12}}U_1^bU_2^d,$ for a matrix $\begin{pmatrix}
  a & b \\
  c & d
 \end{pmatrix}\in \SL(2,\Z).$ This action was further generalised to the higher dimensional noncommutative tori. It was pointed out in \cite{JL15} that the right replacement of the group $\SL(2,\Z)$ is  
 $$\Sp(n, \Z, \theta):= \{ W \in \GL(n,\Z) : W^T\theta W = \theta\}.$$ 
 Then there is a natural action of $\Sp(n, \Z, \theta)$ on the $n$-dimensional noncommutative torus $A_{\theta}.$ It is easy to see that $\Sp(2, \Z, \theta)$ is exactly $\SL(2,\Z).$ 
 
 The study of crossed product C*-algebras associated to finite group actions on noncommutative tori goes back to the work of Bratteli, Elliott, Evans and  Kishimoto (\cite{BEEK91}). However they only looked at the action of $\Z_2$ on the C*-algebra $A_{\theta},$ for 2-dimensional tori. Recall that the action of $\Z_2$ on any $n$-dimensional $A_{\theta},$ often called the \emph{flip action}, is defined by sending $U_i$ to $U_i^{-1}.$  Note that the above action is basically given by the matrix $-\id_n \in \Sp(n, \Z, \theta),$ where $\id_n$ is the $n \times n$ unit matrix. Later various other authors studied actions of other finite cyclic subgroups of $\SL(2,\Z)$ on 2-dimensional noncommutative tori, see \cite{BW07}, \cite{ELPW10}, \cite{Wal95}, \cite{Wal00}.  Motivated by the 2-dimensional results, it is also natural to consider a finite cyclic group $F$ inside $\Sp(n, \Z, \theta)$ and consider the crossed product $A_{\theta}\rtimes F,$ for an $n$-dimensional torus $A_\theta.$ We may call such a crossed product a \emph {noncommutative orbifold.}

 The authors in \cite{ELPW10} and \cite{BCHL18} considered actions of cyclic subgroups of $\SL(2,\Z)$ on 2-dimensional noncommutative tori. Along with K-theory computations of the corresponding crossed products, the authors computed the images of the canonical tracial states of such algebras. The recent development of the classification program of C*-algebras allowed them to  deduce results about isomorphism and Morita equivalence classes of such algebras, when the algebras are simple. One of the major facts they used is that the algebras are simple AH algebras when $\theta$ is irrational (for the finite group actions, the algebras are even AF). Then the algebras are classifiable in the sense of Elliot's classification program.

 In \cite{JL15}, Jeong and Lee, and in \cite{He19}, He studied actions of finite subgroups of $\Sp(n, \Z, \theta)$ on an $n$-dimensional $A_\theta,$ and found many of such crossed products to be classifiable, when $\theta$ is non-degenerate (see Definition~\ref{def:nondegenerate}) so that $A_\theta$ is simple. However they did not discuss  isomorphism and Morita equivalence classes of the crossed products.  Our paper is a first attempt towards these kind of results for the higher dimensional cases.    
 
 To understand isomorphism and Morita equivalence classes of such noncommutative orbifolds it is necessary to compute the K-theory of the orbifolds and understand the ranges of the canonical tracial states of the algebras. While the dimensions of the K-groups are known (from \cite{LL12}), the tracial ranges are not understood. Our main results help to understand which numbers belong to the tracial ranges of the orbifolds, and even determine the tracial ranges completely for several examples.
 
 To understand the tracial range of an orbifold, one should first understand the same for the noncommutative torus itself. This was done by Elliott in \cite{Ell84}. To give an overview of our results, we recall the tracial range result from \cite{Ell84}.
 For an integer $p$ with $1 \leq p \leq \frac{n}{2}$, if we denote the sub-matrix $M^{\theta}_{I}$ of $\theta$ consisting of rows and columns indexed by the numbers $i_1, i_2, ..., i_{2p}$ for some $i_1 < i_2 < ... < i_{2p}$, $I:=\left(i_{1}, i_{2}, \ldots, i_{2 p}\right)$, then Elliott's result may be stated as 
\begin{equation*}
	\operatorname{Tr}\left(\K_{0}\left(A_{\theta}\right)\right)=\mathbb{Z}+\sum_{0<|I| \leq n} \pf(M_{I}^{\theta}) \mathbb{Z},
\end{equation*}
where $|I|:=2m$ for $I=\left(i_{1}, i_{2}, \ldots, i_{2 m}\right)$ and $\pf$ denotes the pfaffian. Here $\Tr$ denotes the canonical tracial state on $A_\theta.$

It was observed in \cite{Cha20} that for each such $I$, there is a projective module $\mathcal{E}_{I}^{\theta}$ over $A_\theta,$ trace of which is exactly $\pf(M_{I}^{\theta}),$ assuming $\pf(M_{I}^{\theta})\neq 0.$ This module is governed by an element $g_{I, \Sigma} \in \mathrm{SO}(n, n|\mathbb{Z}).$ 
 Here $\mathrm{SO}(n, n|\mathbb{Z})$ is a certain subgroup of the group of linear transformations of the space $\mathbb{R}^{2 n}$ preserving the quadratic form $x_{1} x_{n+1}+x_{2} x_{n+2}+\cdots+x_{n} x_{2 n}$ (see Section~\ref{sec:k_theoer_nt} for more details). The modules of such kind are called \emph{fundamental projective modules}.

Now coming back to the crossed products of $n$-dimensional tori $A_\theta$ with a finite cyclic group $F \subset \Sp(n, \Z, \theta),$ if $\Tr^F$ denotes the canonical trace on $A_{\theta} \rtimes F,$  the regular representation $A_{\theta} \rtimes F \hookrightarrow \mathrm{M}_N(A_{\theta})$ gives 
 $$
	\Tr^F(\K_0(A_{\theta} \rtimes F))\subseteq \frac{1}{N}\Tr(\K_0(A_{\theta}))=\frac{1}{N}\Bigg( \mathbb{Z}+\sum_{0<|I| \leq n} \pf(M_{I}^{\theta}) \mathbb{Z}\Bigg).
$$
 Our main theorem (Theorem~\ref{intro:th1}) determines when the term $\frac{1}{N}\pf(M_{I}^{\theta})$ lies in the left hand side of the above equation. The proof of the theorem involves extending the modules $\mathcal{E}_{I}^{\theta}$ to modules over the crossed products using so-called \emph {metaplectic operators}, which were already used by the author (in a joint work with Luef) in \cite{CL19} to extend a specific type of modules (\emph{Bott classes}) to modules over the crossed products.
 
 Let $\mathcal{R}$ denote the subgroup of $\mathrm{SO}(n, n|\mathbb{Z})$ generated by the elements of $\GL(n,\Z).$
\begin{theorem}\label{intro:th1} (Theorem~\ref{thm:mainW}, Theorem~\ref{th:main_trace})
 With all the notations introduced above, assume $\pf(M_{I}^{\theta}) \neq 0.$ Let $W\in \GL(n,\Z)$ be of finite order such that $W^t\theta W = \theta$ and $F:=\langle W \rangle.$  Suppose $g_{I, \Sigma} F(g_{I, \Sigma})^{-1} \subset \mathcal{R}$ inside $\mathrm{SO}(n, n|\mathbb{Z}).$ Then  $\mathcal{E}_{I}^{\theta}$ becomes a finitely generated, projective module over $A_{\theta} \rtimes F$ and $\frac{1}{N}\pf(M_{I}^{\theta}) \in \Tr^F(\K_0(A_{\theta} \rtimes F)),$ where $N$ is the order of $W$. 
	\end{theorem}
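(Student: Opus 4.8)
The plan is to realize $\mathcal{E}_I^\theta$ as a module over $A_\theta \rtimes F$ by producing a compatible $W$-action on it, and then extract the trace statement from the regular-representation inclusion. First I would recall from \cite{Cha20} the explicit construction of $\mathcal{E}_I^\theta$ as a completion of a Schwartz-type space $\mathcal{S}$ on which $A_\theta$ acts via a Heisenberg-type representation, the module being indexed by the element $g_{I,\Sigma} \in \mathrm{SO}(n,n|\Z)$; its trace is $\pf(M_I^\theta)$ by that paper's computation. The key structural input is that $\mathrm{SO}(n,n|\Z)$ acts (projectively) on such modules by \emph{metaplectic operators}: for $h \in \mathrm{SO}(n,n|\Z)$ there is a unitary $\mu(h)$ (unique up to scalar) intertwining the Heisenberg representation attached to $g$ with the one attached to $hg$, exactly as exploited in \cite{CL19} for Bott classes. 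The hypothesis $g_{I,\Sigma}\, F\, (g_{I,\Sigma})^{-1} \subset \mathcal{R}$ is precisely what makes this machinery applicable: conjugating the generator $W$ of $F$ into $\mathcal{R}\subset \mathrm{SO}(n,n|\Z)$ lets us transport the geometric action of $W$ on $A_\theta$ to an action on the module $\mathcal{E}_I^\theta$.

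Concretely, the steps are: (1) Let $h := g_{I,\Sigma}\, W\, (g_{I,\Sigma})^{-1} \in \mathcal{R}$. Because $\mathcal{R}$ is generated by $\GL(n,\Z)$, the metaplectic operator $\mu(h)$ acts on the module associated to $g_{I,\Sigma}$ in a way compatible with the automorphism of $A_\theta$ induced by $W$ — i.e.\ $\mu(h)$ implements a semilinear map $V$ on $\mathcal{S}$ with $V(a\cdot \xi) = W(a)\cdot V(\xi)$ for $a \in A_\theta$, $\xi \in \mathcal{S}$. (2) Check that $V^N$ acts as a scalar on the (irreducible-enough) module and, after rescaling $V$ by that scalar if necessary, arrange $V^N = \mathrm{id}$, so that $V$ generates an action of $F = \Z/N$ on $\mathcal{E}_I^\theta$ covering the $F$-action on $A_\theta$. (3) Conclude, by the standard equivalence between $F$-equivariant $A_\theta$-modules and $(A_\theta\rtimes F)$-modules, that $\mathcal{E}_I^\theta$ becomes a finitely generated projective module over $A_\theta \rtimes F$; finite generation and projectivity pass along because $\mathcal{E}_I^\theta$ was already finitely generated projective over $A_\theta$ and the crossed product is a finite extension. (4) For the trace: the canonical trace $\Tr^F$ on $A_\theta\rtimes F$ restricts on the image of $\mathcal{E}_I^\theta$ to $\tfrac1N$ times the $A_\theta$-trace (this is the normalization of $\Tr^F$ under $A_\theta\rtimes F \hookrightarrow \mathrm{M}_N(A_\theta)$ already written down in the introduction), so $\Tr^F([\mathcal{E}_I^\theta]) = \tfrac1N \pf(M_I^\theta)$, giving $\tfrac1N \pf(M_I^\theta)\in \Tr^F(\K_0(A_\theta\rtimes F))$.

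The main obstacle is step (2): metaplectic operators are only defined up to a phase, so $V$ a priori only gives a \emph{projective} representation of $F$, and one must show the cocycle is trivial — equivalently, that $V$ can be normalized so $V^N = \mathrm{id}$. For a finite cyclic group this is a cohomological triviality once one controls the scalar $V^N \in \C^\times$; since $V^N$ must commute with the $A_\theta$-action and implement $W^N = \mathrm{id}_{A_\theta}$, it is a unitary in the commutant, and one needs the module to be sufficiently irreducible (or the center small enough) to force $V^N$ to be a scalar, which is where the non-degeneracy of $M_I^\theta$ (the assumption $\pf(M_I^\theta)\neq 0$) and the explicit form of $\mathcal{E}_I^\theta$ enter. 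A secondary technical point is verifying that $\mu(h)$ for $h\in\mathcal{R}$ genuinely preserves the smooth subspace $\mathcal{S}$ and is bounded for the module norm; this should follow from the $\GL(n,\Z)$ generators acting by manifestly nice (essentially permutation/linear-substitution) operators, as in \cite{CL19}.
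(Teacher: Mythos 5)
Your proposal follows essentially the same route as the paper: transport the $W$-action to $\mathcal{E}_I^\theta$ via metaplectic-type operators, invoke the Green--Julg correspondence between $F$-equivariant $A_\theta$-modules and $A_\theta\rtimes F$-modules, and divide the trace by $|F|=N$. The one point you treat as secondary is actually where the hypothesis does its work in the paper: $g_{I,\Sigma}F(g_{I,\Sigma})^{-1}\subset\mathcal{R}$ forces $W$ (after conjugating by the permutation $R_I^\Sigma$) into the block-diagonal form $\left(\begin{smallmatrix} W_1 & 0 \\ 0 & W_4\end{smallmatrix}\right)$ with $W_1$ a $\theta_{11}$-symplectic $2p\times 2p$ block, which is exactly what lets one write the operator on $\mathscr{S}(\R^p\times\Z^q)$ explicitly as a metaplectic operator in the $\R^p$-variable combined with the substitution $x_2\mapsto W_4x_2$ in the $\Z^q$-variable and verify the covariance relation directly, while the phase-normalization issue you flag is inherited from \cite{CL19} via the vanishing of $\mathrm{H}^2(F,\mathbb{S}^1)$.
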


The condition in the above theorem is easy to check for many examples. In fact, we provide some examples with explicit tracial range computations. These examples include the two dimensional cases and the flip action of $\Z_2$. It is worthwhile to explicitly state the consequences for the flip action here in the introduction, since the results were unknown to the author. For the tracial range we get,
	$$\Tr^{\Z_2}(\K_0(A_\theta\rtimes \Z_2)) = \frac{1}{2}\Tr(\K_0(A_\theta)),$$
	for any $\theta$ in  $\mathcal{T}_n.$ And as a corollary we have, 
	
	\begin{corollary}\label{intro:cor1}(Corollary~\ref{cor:Z_2_main})
		Let $\theta_{1}, \theta_{2} \in \mathcal{T}_n $ be non-degenerate.
		Let $\Z_2$ act on $A_{\theta_{1}}$ and $A_{\theta_{2}}$ by the flip actions.  Then $A_{\theta_{1}}\rtimes \Z_2$ is strongly Morita equivalent to $A_{\theta_{2}}\rtimes \Z_2$ if and only if $A_{\theta_{1}}$ is strongly Morita equivalent to $A_{\theta_{2}}.$
 	 \end{corollary}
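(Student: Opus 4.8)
The plan is to pass everything through the Elliott classification. For non-degenerate $\theta$ the torus $A_\theta$ is simple, unital, separable, nuclear, satisfies the UCT and is $\mathcal{Z}$-stable, hence is classified by its Elliott invariant; by \cite{JL15} and \cite{He19} the flip orbifold $A_\theta \rtimes \Z_2$ has the same properties (simplicity uses that the flip is outer on a simple $A_\theta$), and both algebras carry a unique tracial state, written $\tau$ in either case. Since strong Morita equivalence of separable C*-algebras is stable isomorphism, the pertinent invariant is the Elliott invariant with the class of the unit dropped and the trace correspondingly allowed a positive rescaling; for our algebras this amounts to the ordered group $\K_0$ together with $\K_1$ and the pairing $\tau_*\colon \K_0 \to \R$, up to isomorphism and up to rescaling $\tau$, the order on $\K_0$ being the one determined by $\tau_*$ through strict comparison (valid as the algebra is $\mathcal{Z}$-stable). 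Now $\K_0(A_\theta) \cong \K_1(A_\theta) \cong \Z^{2^{n-1}}$ is independent of $\theta$, and by \cite{LL12} the dimensions of $\K_*(A_\theta \rtimes \Z_2)$ depend only on $n$ (and for the flip these groups are torsion-free, hence abstractly $\theta$-independent); what varies with $\theta$ is only the trace pairing. By \cite{Ell84} the image of $\tau_*$ on $\K_0(A_\theta)$ is $\Z + \sum_{0 < |I| \le n} \pf(M_{I}^{\theta})\,\Z$, and by the main theorem above the image of $\tau_*$ on $\K_0(A_\theta \rtimes \Z_2)$ equals $\tfrac12\bigl(\Z + \sum_{0 < |I| \le n} \pf(M_{I}^{\theta})\,\Z\bigr)$.

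Consider first the case in which $\tau_*$ is injective on $\K_0(A_\theta)$ --- for instance whenever $1$ and the pfaffians $\pf(M_{I}^{\theta})$ are $\Q$-linearly independent. Then the ordered group with trace attached to $A_\theta$ is, up to isomorphism and rescaling, just the subgroup $\Z + \sum_{0 < |I| \le n} \pf(M_{I}^{\theta})\,\Z$ of $\R$ with its inherited order; and by the Lin--Lin description together with the main theorem, the analogous object for $A_\theta \rtimes \Z_2$ is the direct sum of $\tfrac12$ times that subgroup with a $\theta$-independent, purely $\Z$-valued summand coming from the fixed points of the flip. Classification thus yields: $A_{\theta_1}$ and $A_{\theta_2}$ are strongly Morita equivalent if and only if $\lambda\,\Tr(\K_0(A_{\theta_1})) = \Tr(\K_0(A_{\theta_2}))$ for some $\lambda > 0$, while $A_{\theta_1} \rtimes \Z_2$ and $A_{\theta_2} \rtimes \Z_2$ are strongly Morita equivalent if and only if $\mu \cdot \tfrac12\Tr(\K_0(A_{\theta_1})) = \tfrac12\Tr(\K_0(A_{\theta_2}))$ for some $\mu > 0$; the two factors of $\tfrac12$ cancel, so these conditions coincide and the corollary follows in this case. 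For the ``if'' direction one may alternatively avoid classification entirely: a Rieffel--Schwarz equivalence bimodule between $A_{\theta_1}$ and $A_{\theta_2}$ is governed by an element of $\mathrm{SO}(n,n|\Z)$, inside which the flip is the central element $-\id_{2n}$, so the bimodule is automatically $\Z_2$-equivariant for the two flip actions and descends to a strong Morita equivalence of the crossed products.

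The main obstacle is the step I have glossed over above: establishing that the ``stable'' Elliott invariant of $A_\theta \rtimes \Z_2$ is determined by, and conversely determines, that of $A_\theta$, also when $\tau_*$ fails to be injective. For $n \ge 3$ there exist non-degenerate $\theta$ whose $\K_0(A_\theta)$ has a nonzero trace-infinitesimal subgroup --- e.g.\ when $1$ and the pfaffians $\pf(M_{I}^{\theta})$ satisfy a nontrivial $\Q$-linear relation, which is perfectly compatible with simplicity of $A_\theta$ --- and then the tracial range alone no longer pins down the ordered group. One then needs to read off from the Lin--Lin computation of $\K_*(A_\theta \rtimes \Z_2)$ that the order structure and the infinitesimal part of $\K_0$ are transported faithfully under the passage from $A_\theta$ to $A_\theta \rtimes \Z_2$, the extra fixed-point summand being $\theta$-independent and rationally trivial and hence harmless. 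Granting this naturality, the corollary follows formally by combining it with the classification theorem and the main theorem above.
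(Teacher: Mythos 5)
Your overall route is the same as the paper's: classify $A_{\theta_i}\rtimes\Z_2$ (simple, tracial rank zero via the tracial Rokhlin property, UCT, torsion-free K-theory by Langer--L\"uck, hence an AT algebra), reduce strong Morita equivalence to agreement of the Elliott invariants up to rescaling the trace, and then use the computation $\Tr^{\Z_2}(\K_0(A_\theta\rtimes\Z_2))=\tfrac12\Tr(\K_0(A_\theta))$ so that the factor $\tfrac12$ cancels. That is exactly the paper's Theorem~\ref{th:main_trace} (Example~\ref{ex:trace_flip}) fed into Theorem~\ref{thm:main_Morita}.

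However, as written your argument has a genuine gap, and it is the one you flag yourself: the case where $\tau_*$ is not injective on $\K_0$. You leave this as ``granting this naturality,'' and the naturality you ask for (that the infinitesimal part of $\K_0$ is ``transported faithfully'' from $A_\theta$ to $A_\theta\rtimes\Z_2$) is both unproved in your write-up and, in fact, not the right thing to prove. The resolution is simpler: for a simple unital C*-algebra with tracial rank zero and a unique tracial state, the positive cone of $\K_0$ is $\{x:\Tr(x)>0\}\cup\{0\}$, so \emph{any} group isomorphism $g\colon\K_0(A_{\theta_1}\rtimes\Z_2)\to\K_0(A_{\theta_2}\rtimes\Z_2)$ satisfying $\lambda\Tr^{\Z_2}_{\theta_2}\circ g=\Tr^{\Z_2}_{\theta_1}$ is automatically an order isomorphism (this is \cite[Proposition 3.7]{BCHL18} in the paper's proof). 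Such a $g$ always exists when the trace ranges agree up to the scalar $\lambda$: both $\K_0$ groups are free of the same ($\theta$-independent) rank, the two short exact sequences $0\to\ker\Tr^{\Z_2}_{\theta_i}\to\K_0\to R_i\to0$ split, the ranges $R_1=\lambda R_2$ are free of equal rank, hence so are the kernels, and one takes $g$ to be multiplication by $1/\lambda$ on the range summand together with an arbitrary isomorphism of the kernels. No compatibility of the infinitesimals with the passage from $A_\theta$ to the crossed product is needed, because the infinitesimal subgroup carries no order information here. Finally, your proposed classification-free proof of the ``if'' direction does not work as stated: for $n\ge3$ it is not known that every strong Morita equivalence between simple tori is implemented by an element of $\mathrm{SO}(n,n|\Z)$ (the paper points out that Rieffel's converse is a $2$-dimensional result), and even when it is, promoting the bimodule to a $\Z_2$-equivariant one requires the metaplectic construction of Theorem~\ref{thm:mainW}, not merely the centrality of $-\id$.
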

 	 It is worth mentioning that the only action of a finite cyclic subgroup of $\Sp(3, \Z, \theta)$ on a 3-dimensional torus $A_\theta,$ when $\theta$ is non-degenerate, is the flip action (\cite[Theorem 1.4]{JL15}).  
 	 
 	Apart from the applications in classification of C*-algebras, the computations of the ranges of tracial states turn out to be useful in physics (Bellisard's gap labelling theorem, in particular). Our results are similar to results which appeared in connection with the study of a twisted version of the gap labelling theorem, recently conjectured in \cite{BM18}. We hope that our techniques will be helpful for a better understanding of the conjecture.  
	
	This article is organised as follows: in Section~\ref{sec:twisted_C*_algebras} we recall the definition of twisted group C*-algebras and give relevant examples. In Section~\ref{sec:k_theoer_nt}, we discuss the fundamental projective modules over noncommutative tori. Section~\ref{sec:orbifolds_projective} deals with  extending the fundamental modules to modules over orbifolds, and proving Theorem~\ref{intro:th1} (Theorem~\ref{thm:mainW}). In the last section, Section~\ref{Sec:ntflip}, the proof of Theorem~\ref{intro:th1} (Theorem~\ref{th:main_trace}) about the ranges of the  canonical traces on orbifolds is discussed along with various examples. We also discuss the results about Morita equivalence classes of orbifolds, along with Corollary~\ref{intro:cor1} in Section~\ref{Sec:ntflip}.

	\textbf {Notation}: $e(x)$ will always denote the number $e^{2\pi i x}$, and $\id_m$ will be the $m\times m$ unit matrix.

	%%%%%%%%%%%%%%%%%%%%%%%%%%%%%%%%%%%%%%%%%%%%%%%%%%%%%%%%%%%%%%%%%%%%%%%%%%%%%%%%%%%%%%%%%%%%%%%%%%%%%%%%%%%%%%%%%%%%%%%%%%%%%%%%%%%%%%%%%%%%%%%%%%%%%%%%%%%%%%%%%%%%%%%%%%%%%%%%%%%%%%%%%%%%%%%%%%%%%%%%%%%%%%%%%%%%%%%%%%%%%%%%%%%%%%%%%%%%%%%%%%%%%%%%
	
	\section{Twisted group C*-algebras and noncommutative orbifolds}\label{sec:twisted_C*_algebras}
	Let $G$ be a discrete group.  A map $\omega: G\times G \to \mathbb{T}$ is called a \emph{2-cocycle} if 
$$ \omega(x, y) \omega(xy, z)= \omega(x, yz) \omega(y, z)
\label{cocycle}
$$
whenever $x, y, z \in G $, and if 
$$ \omega(x, 1) = 1 = \omega(1, x)\label{units} $$
for any $x \in G$.

The \emph{$\omega$-twisted left regular representation} of the group $G$ is given by the formula:

$$(L_{\omega}(x)f)(y) = \omega(x, x^{-1}y)f(x^{-1}y),$$ for $f \in l^2(G)$. The \emph{reduced twisted group C*-algebra} $C^*(G,\omega)$  is defined as the sub-C*-algebra of $B(l^2(G))$ generated by the $\omega$-twisted left regular representation of the group $G$. Since we do not talk about full group C*-algebras in this paper, we simply call $C^*(G,\omega)$ the twisted group C*-algebra of $G$ with respect to $\omega.$ When $\omega=1,$ $C^*(G,\omega)=:C^*(G)$ is the usual reduced group C*-algebra of $G.$  We refer to \cite[Section 1]{ELPW10} for more on twisted group C*-algebras and the details of the above construction.

\begin{example}\label{ex:nt}
	\normalfont
Let $G$ be the group $\Z^n$. For each $\theta \in \mathcal{T}_{n}$, construct a 2-cocycle on $G$ by defining $\omega_\theta(x, y) = e(\langle -\theta x,y \rangle )$. The corresponding twisted group C*-algebra $C^*(G, \omega_\theta)$ is isomorphic to the $n$-dimensional noncommutative torus $A_\te$, which was defined in the introduction. 

  \end{example}
\begin{example}\label{ex:orbifold}
\normalfont
Suppose $W$ be an invertible $n \times n$ matrix of finite order with integer entries. Let $F := \langle W \rangle$ act on $\Z^n$ by usual matrix multiplication with vectors. Let us also take $\theta \in \mathcal{T}_{n}$. We assume in addition  that $W$ is a \emph{$\theta$-symplectic} matrix, i.e. $W^t\theta W = \theta$. Then we can define a 2-cocycle $\omega_\theta '$ on $G:=\Z^n \rtimes F$ by $\omega_\theta '((x,s),(y,t)) = \omega_\theta (x,s\cdot y)$.  Sometimes one calls the corresponding twisted group C*-algebra, $C^*(G, \omega_\theta')$, a \emph{noncommutative orbifold}. We will come back to this example in Section~\ref{sec:orbifolds_projective}.
 \end{example}

\section{K-theory generators of noncommutative tori}\label{sec:k_theoer_nt}

\subsection{Projective modules over noncommutative tori}\label{sec:proj_mod_nt}

 In \cite{RS99}, Rieffel and Schwarz defined (densely) an action of the group $\mathrm{SO}(n, n|\Z)$ on $\mathcal{T}_{n}$. Recall that $\mathrm{SO}(n, n|\Z)$ is the subgroup of $\mathrm{GL}(2n, \R),$ which contains matrices, with integer entries and of determinant 1,  of the following $2 \times 2$ block form: $$
g=\left(\begin{array}{ll}
A & B \\
C & D
\end{array}\right),
$$
where $A, B, C$ and $D$ are arbitrary $n \times n$ matrices over $\Z$ satisfying
$$
A^{t} C+C^{t} A=0, \quad B^{t} D+D^{t} B=0 \quad \text { and } \quad A^{t} D+C^{t} B=\id_n.
$$
The action of $\mathrm{SO}(n, n|\Z)$ on $\mathcal{T}_{n}$ is defined as
$$
g \theta:=(A \theta+B)(C \theta+D)^{-1}
$$
whenever $C \theta+D$ is invertible. The subset of $\mathcal{T}_{n}$ on which the action of every $g \in \mathrm{SO}(n, n|\Z)$ is defined, is dense in $\mathcal{T}_{n}$ (see \cite[page 291]{RS99}). We have the following theorem due to Hanfeng Li. 
\begin{theorem}\label{thm:li_Morita}(\cite[Theorem 1.1]{Li04})
	  For any $\theta \in \mathcal{T}_{n}$ and $g \in \mathrm{SO}(n, n|\Z),$ if $g \theta$ is defined then $A_{\theta}$ and $A_{g \theta}$ are strongly Morita equivalent.
\end{theorem}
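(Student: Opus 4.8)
The plan is to reduce the statement to the generators of $\mathrm{SO}(n,n|\Z)$ and build an imprimitivity bimodule for each. By Rieffel and Schwarz, $\mathrm{SO}(n,n|\Z)$ is generated by: the matrices $\rho(A)=\begin{pmatrix}A&0\\0&(A^t)^{-1}\end{pmatrix}$ with $A\in\GL(n,\Z)$; the shears $\nu(B)=\begin{pmatrix}\id_n&B\\0&\id_n\end{pmatrix}$ with $B=-B^t\in M_n(\Z)$; and a single partial flip $\sigma$, which interchanges the first two of the $\R^n$-coordinates with the first two dual coordinates (and fixes the rest). Since strong Morita equivalence is an equivalence relation closed under composition of equivalence bimodules, it suffices to (i) verify the assertion for each generator, and (ii) show that whenever $g\theta$ is defined, $g$ admits a factorization $g=h_k\cdots h_1$ into generators for which every intermediate matrix $C_j\theta+D_j$ (coming from the partial product $h_j\cdots h_1$) is invertible, so that the chain of equivalences can be concatenated.

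The two ``parabolic'' families are essentially formal. For $\rho(A)$ one has $\rho(A)\theta=A\theta A^{t}$, which is always defined, and $U_j\mapsto \lambda_j\prod_k U_k^{A_{kj}}$ for suitable phases $\lambda_j\in\T$ extends to a $*$-isomorphism $A_\theta\xrightarrow{\ \sim\ }A_{\rho(A)\theta}$, hence a (trivial) Morita equivalence. For $\nu(B)$ one has $\nu(B)\theta=\theta+B$, again always defined, and since $B$ has integer entries the defining relations \eqref{eq:ccr} for $\theta$ and for $\theta+B$ coincide, so $A_\theta=A_{\nu(B)\theta}$ literally. Thus all the content is concentrated in the single generator $\sigma$.

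For $\sigma$ with $n=2$ this is Rieffel's classical result $A_\theta\sim A_{-\theta^{-1}}$, implemented by the Heisenberg module $\mathcal{S}(\R)$ when $\theta$ is irrational and holding trivially when $\theta$ is rational, so the statement is true for every $\theta$ in the domain of $\sigma$. For general $n$ I would construct an $A_\theta$--$A_{\sigma\theta}$ bimodule by Rieffel's lattice construction on $\mathcal{S}(\R\times\Z^{n-2})$: the $\sigma$-block acts as in the two-dimensional Heisenberg module, while the remaining $n-2$ coordinates act by the translation/modulation operators attached to an embedding of $\Z^{n-2}$. One then verifies Rieffel's compatibility axioms for the two $C^{*}$-valued inner products.

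The real obstacle is showing that this module is finitely generated projective (equivalently, full) over each of $A_\theta$ and $A_{\sigma\theta}$ \emph{without} the genericity hypothesis used by Rieffel and Schwarz. When the $2\times2$ sub-block of $\theta$ picked out by $\sigma$ is rational --- or, in the composed situation, when $C\theta+D$ is invertible but close to singular --- the clean diagonalization into the $\mathcal{S}(\R)$ picture can fail. The way to push through is to compute the ``rank'' of the constructed module via the canonical trace and show it is a unit in $K_0$, using that its trace is a Pfaffian/determinant-type quantity (of order $|\det(C\theta+D)|^{\pm1/2}$) which is nonzero precisely when $g\theta$ is defined; fullness on the other side is symmetric. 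Combined with a density argument for (ii) --- the set of $\theta$ admitting a ``good'' factorization is dense, and one transfers the conclusion to the remaining $\theta$ by continuity of the bimodule construction --- this yields the theorem for all $\theta$ in the domain of $g$. I expect step (ii), together with the genericity-free fullness step, to be where the genuine difficulty lies.
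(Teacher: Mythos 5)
First, note that the paper does not prove this statement at all --- it is quoted verbatim from Li \cite{Li04} --- so what you are really proposing is a proof of Li's theorem. Your architecture is the right one and matches Rieffel--Schwarz/Li: reduce to the generators $\rho(A)$, the integral shears, and the partial flip; the two parabolic families give honest isomorphisms, and the flip is handled by a Heisenberg-type equivalence bimodule on $\mathscr{S}(\R\times\Z^{n-2})$. One correction of emphasis: fullness of that bimodule is \emph{not} where the genericity enters. Whenever the relevant $2\times 2$ (more generally $2p\times 2p$) block of $\theta$ is invertible --- which is exactly the condition for $\sigma\theta$ to be defined --- Rieffel's construction already yields a genuine equivalence bimodule, with both inner products full, rational blocks included; no trace computation is needed for that.

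The genuine gap is your step (ii). Rieffel and Schwarz only obtained the theorem for a dense set of $\theta$ precisely because a fixed factorization $g=h_k\cdots h_1$ may have intermediate matrices $C_j\theta+D_j$ that are singular even though $C\theta+D$ is invertible; removing this restriction is the entire content of Li's theorem. Your proposed remedy --- prove it on a dense set of $\theta$ and ``transfer the conclusion to the remaining $\theta$ by continuity of the bimodule construction'' --- cannot work, because strong Morita equivalence of noncommutative tori is wildly discontinuous in $\theta$. The range of the canonical trace on $\K_0$, namely $\Z+\sum_{I}\pf(M_I^{\theta})\Z$, is a Morita invariant up to a positive scalar (this is exactly how the present paper distinguishes orbifolds), and already for $n=2$ the Morita class of $A_\theta$ is the countable orbit $\GL(2,\Z)\theta$; no property holding on a dense set of parameters can be propagated to a limit point. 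What is needed instead is a purely algebraic statement --- and this is what Li actually supplies --- that whenever $g\theta$ is defined one can \emph{choose} a factorization of $g$ (using the flips $\sigma_{2p}$ for all $1\le p\le n/2$, conjugated and corrected by elements of $\GL(n,\Z)$ and integral shears) so that every partial product is defined at the corresponding intermediate matrix. Without that lemma, or a substitute for it, your argument establishes only the Rieffel--Schwarz generic-$\theta$ version, not the theorem as stated.
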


For any $R \in \mathrm{GL}(n,\Z),$ let us denote by $\rho(R)$ the matrix $\left(\begin{array}{cc}R & 0 \\ 0 & \left(R^{-1}\right)^{\mathrm{t}}\end{array}\right) \in \mathrm{SO}(n, n|\Z) ,$ and for any $N \in \mathcal{T}_{n} \cap \mathrm{M}_{n}(\Z),$ we denote by $\mu(N)$ the matrix $\left(\begin{array}{cc}\id_n & N \\ 0 & \id_n\end{array}\right) \in \mathrm{SO}(n, n|\Z).$
Notice that the noncommutative tori corresponding to the matrices $\rho(R) \theta=R \theta R^{t}$ and $\mu(N) \theta=\theta+N$ are both isomorphic to $A_{\theta}$. Also define

$$\mathrm{SO}(n, n|\Z) \ni \sigma_{2 p}:=\left(\begin{array}{cccc}0 & 0 & \mathrm{id}_{2 p} & 0 \\ 0 & \mathrm{id}_{n-2 p} & 0 & 0 \\ \mathrm{id}_{2 p} & 0 & 0 & 0 \\ 0 & 0 & 0 & \mathrm{id}_{n-2 p}\end{array}\right), 1 \leqslant p \leqslant n / 2.$$
 
 We recall the approach of Rieffel \cite{Rie88} to find the $A_{\sigma_{2 p}\theta}-A_\theta$ bimodule and follow the presentation in \cite{Li04}.

We fix $1 \leqslant p \leqslant n / 2,$ and let $q \in \N$ such that  $n=2p + q$. Let us write $\theta \in \mathcal{T}_n$ as $\left( \begin{array}{ccc}
\theta_{11} & \theta_{12}\\
  \theta_{21}  & \theta_{22}\
  \end{array} \right),$ partitioned into four sub-matrices $\theta_{11},\theta_{12},\theta_{21},\theta_{22},$ and assume $\theta_{11}$ to be an invertible $2p\times 2p$ matrix.   Define a new cocycle $\omega_{\theta'}$  on $\Z^n$ by $\omega_{\theta'}(x, y)  = e(\langle -\theta'x , y \rangle/2 )$, where $$
\theta'    =  
\left( \begin{array}{cc}
\theta_{11}^{-1}  &  -\theta_{11}^{-1}\theta_{12}\\
\theta_{21}\theta_{11}^{-1}  &  
\theta_{22} - \theta_{21}\theta_{11}^{-1}\theta_{12}
\end{array} \right) = \sigma_{2 p}\theta.       
$$

Set $\mathcal{A} = C^* ( \Z^n, \omega_{\theta} )$ and
$\mathcal{B} = C^* ( \Z^n, \omega_{\theta'} )$.  Let $M$ be the group $\R^p \times \Z^q$, $G := M \times \widehat{M}$ and $\langle \cdot , \cdot \rangle$ be the natural pairing between  $M$ and its dual group $\widehat{M}$ (our notation does not distinguish between the pairing of a group and its dual group, and the standard inner product on a linear space). 
Consider the Schwartz space $\mathcal{E}^\infty := \mathscr{S}(M)$
consisting of smooth and rapidly decreasing complex-valued functions on $M$.

Denote by $\mathcal{A}^\infty = \mathcal{S} (\Z^n, \omega_{\theta})$ and  $\mathcal{B}^\infty = \mathcal{S} (\Z^n, \omega_{\theta'})$ 
the dense sub-algebras of $\mathcal{A} $ and $\mathcal{B} $, respectively, consisting of formal series (of the variables $\{U_i\}$) with rapidly decaying coefficients.
Let us consider the following $(2p+2q) \times (2p + q)$ real valued matrix:
\begin{equation}\label{eq:Tmap}
	T = \left( \begin{array}{cc}
T_{11} & 0\\
0  & \id_q\\
T_{31}  & T_{32}
\end{array} \right), 
\end{equation}
where $T_{11}$ is an invertible matrix such that $T_{11}^tJ_0T_{11} = \theta_{11}$, $J_0
  := \left( \begin{array}{ccc}
0 & \id_p\\
  -\id_p  & 0\\
  \end{array} \right)$, $T_{31} = \theta_{21}$ and $T_{32}$ is any $q \times q$ matrix such that $\theta_{22} =  T_{32}-T_{32}^t.$ For our purposes, we take $T_{32}=\theta_{22}/2.$
  
 We also define  the following $(2p+2q) \times (2p + q)$ real valued matrix:
 $$
S = \left( \begin{array}{cc}
J_0(T_{11}^t)^{-1} & -J_0(T_{11}^t)^{-1}T_{31}^t\\
0  & \id_q\\
0  & T^t_{32}
\end{array} \right).
$$ 
Let 
$$
 J = \left( \begin{array}{ccc}
J_0 & 0 & 0\\
0 & 0 & \id_q\\
0 & -\id_q & 0\\
\end{array} \right) 
$$ and $J'$ be the matrix obtained from $J$ by replacing the negative entries of it by zeroes. Note that $T$ and $S$ can be thought as maps from $(\R^{n})^*$ to $\mathbb{R}^{p} \times (\mathbb{R}^ p)^* \times \mathbb{R}^{q} \times (\mathbb{R}^ q)^*$ (see  the definition of an embedding map in \cite[Definition 2.1]{Li04}), and  $S\left(\mathbb{Z}^{n}\right), T\left(\mathbb{Z}^{n}\right) \subseteq \mathbb{R}^{p} \times (\mathbb{R}^ p)^* \times \mathbb{Z}^{q} \times (\mathbb{R}^ q)^*$. Then we can think of $S\left(\mathbb{Z}^{n}\right), T\left(\mathbb{Z}^{n}\right)$ as in $G$
via composing $\left.S\right|_{\mathbb{Z}^{n}}, \left.T\right|_{\mathbb{Z}^{n}}$ with the natural covering map $\mathbb{R}^{p} \times (\mathbb{R}^ p)^* \times \mathbb{Z}^{q} \times (\mathbb{R}^ q)^* \rightarrow G$. 
Let $P'$ and $P''$ be the canonical projections of $G$ to $M$ and $\widehat{M}$, respectively, and let
$$T':=P'\circ T, \quad T'':=P''\circ T, \quad  S':= P'\circ S, \quad S'':= P''\circ S.$$  Then the following formulas define a $\mathcal{B}^\infty$-$\mathcal{A}^\infty$ bimodule structure on $\mathcal{E}^\infty$:

\begin{equation} \label{eq:proj_mod_T}
(f U_l^\theta)(x)= e(\langle -T(l), J^{\prime} T(l)/2\rangle)\langle x, T^{\prime \prime}(l) \rangle f(x-T^{\prime}(l)), 
\end{equation} 

\begin{equation} \label{eq:proj_mod_inner}
\langle f,g \rangle_{\mathcal{A}^\infty}(l)= e(\langle-T(l), J^{\prime} T(l)/2\rangle)\int_{G} \langle x,-T^{\prime \prime}(l) \rangle g(x+T^{\prime}(l))\bar{f}(x) dx, 
\end{equation}

\begin{equation} \label{eq:proj_mod_Tprime}
(U_l^{\sigma_{2 p}\theta}  f)(x)= e(\langle-S(l), J^{\prime} S(l)/2\rangle)\langle x, -S^{\prime \prime}(l) \rangle f(x+S^{\prime}(l)), 
\end{equation} 

\begin{equation} \label{eq:proj_mod_inner_Tprime}
{}_{\mathcal{B}^\infty}\langle f,g \rangle(l)= e(\langle S(l), J^{\prime} S(l)/2\rangle)\int_{G} \langle x,S^{\prime \prime}(l) \rangle \bar{g}(x+S^{\prime}(l))f(x) dx, 
\end{equation}
 where $U_l^\theta, U_l^{\sigma_{2 p}\theta}$ denote the canonical unitaries with respect to the group element $l \in \Z^n$ in $\mathcal{A}^\infty $ and $\mathcal{B}^\infty$, respectively. See Proposition 2.2 in \cite{Li04} for the following well-known result.

\begin{theorem}[Rieffel]\label{theorem:rieffel_proj_mod}
The smooth module $\mathcal{E}^\infty$, with the above structures, is an $\mathcal{B}^\infty$-$\mathcal{A}^\infty$  Morita equivalence bimodule  which can be extended to a strong Morita equivalence between $\mathcal{B}$ and $\mathcal{A}$. 

\end{theorem}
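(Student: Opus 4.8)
The plan is to verify directly that the explicit formulas \eqref{eq:proj_mod_T}--\eqref{eq:proj_mod_inner_Tprime} satisfy the axioms of a pre-equivalence bimodule in the sense of Rieffel, and then invoke completion. The key algebraic input is the relation between $T$ and $S$: one should first record that $T^t J' T - (T^t J' T)^t = \theta$ (equivalently, the phase cocycle $e(\langle -T(l), J'T(l)/2\rangle)$ implements exactly the cocycle $\omega_\theta$ on $\Z^n$), and similarly that the analogous combination built from $S$ reproduces $\omega_{\theta'}=\omega_{\sigma_{2p}\theta}$. This reduces to the defining properties of $T_{11}$ ($T_{11}^t J_0 T_{11}=\theta_{11}$), the choice $T_{31}=\theta_{21}$, $T_{32}=\theta_{22}/2$, and the corresponding block structure of $S$; these are finite matrix identities that I would state and leave to the reader.

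First I would check that the right $\mathcal{A}^\infty$-action \eqref{eq:proj_mod_T} is a genuine module structure, i.e. $(f U_l^\theta) U_m^\theta = \omega_\theta(l,m)\, f U_{l+m}^\theta$. This is where the Heisenberg-type cocycle computation appears: translating $x \mapsto x - T'(l)$ and then $x\mapsto x - T'(m)$ produces a character-valued phase $\langle T'(l), T''(m)\rangle$ together with the two quadratic phases, and one must see that the difference of these phases from the phase for $l+m$ equals $\omega_\theta(l,m)$. The same computation with $S$ in place of $T$ and the sign conventions of \eqref{eq:proj_mod_Tprime} gives the left $\mathcal{B}^\infty$-action, and commutativity of the two actions follows because $T'(l)$ and $S'(m)$ are translated in the same direction while the characters $T''$, $S''$ pair against the opposite variable, so the cross phases cancel; more conceptually, it is forced by the orthogonality $S^t J T$ being (up to the relevant normalization) the identity pairing, which is the content of $\sigma_{2p}$ being in $\mathrm{SO}(n,n|\Z)$.

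Next I would verify that \eqref{eq:proj_mod_inner} and \eqref{eq:proj_mod_inner_Tprime} define $\mathcal{A}^\infty$- and $\mathcal{B}^\infty$-valued inner products: sesquilinearity and the adjoint/positivity conditions, compatibility with the module actions ($\langle f U_l^\theta, g\rangle_{\mathcal{A}^\infty} = (U_l^\theta)^* \langle f,g\rangle_{\mathcal{A}^\infty}$ and its left analogue), and the crucial associativity identity ${}_{\mathcal{B}^\infty}\langle f,g\rangle\, h = f\, \langle g,h\rangle_{\mathcal{A}^\infty}$. The last is a Poisson-summation / Fourier-inversion statement on $G = M\times\widehat M$: expanding both sides as integrals over $G$ against the Schwartz functions and matching the phases reduces it to the fact that integrating a character over $G$ picks out the diagonal, exactly as in Rieffel's original Heisenberg-module argument. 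Finally, fullness of both ranges on $\mathscr{S}(M)$ follows from the standard observation that the ideals generated by the inner products contain an approximate unit (one exhibits $f,g$ with $\langle f,g\rangle_{\mathcal{A}^\infty}$ close to $1$ using a bump function at the identity of $M$).

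The main obstacle is bookkeeping rather than conceptual: keeping the four quadratic phases $e(\langle \mp T(l), J'T(l)/2\rangle)$, $e(\langle \mp S(l), J'S(l)/2\rangle)$ and the characters $\langle x, \pm T''(l)\rangle$, $\langle x, \pm S''(l)\rangle$ consistent through every translation of the variable $x\in G$, and confirming that all the phase discrepancies collapse to precisely $\omega_\theta$, $\omega_{\theta'}$, or $1$ as required. Once these identities are in hand, the passage from the dense smooth bimodule $\mathcal{E}^\infty$ to a $\mathcal{B}$-$\mathcal{A}$ imprimitivity bimodule is the routine completion argument (boundedness of the actions, density of the ranges of the inner products in $\mathcal{A}$ and $\mathcal{B}$), which I would cite from \cite[Proposition 2.2]{Li04} and \cite{Rie88} rather than reproduce.
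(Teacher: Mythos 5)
The paper does not prove this statement at all: it is quoted as a known theorem of Rieffel, with the proof delegated entirely to \cite[Proposition 2.2]{Li04} and to \cite{Rie88}. What you have written is therefore not an alternative argument but a faithful outline of the verification that those references actually carry out: the identity $T^{t}JT=\theta$ (equivalently $T^{t}J'T-(T^{t}J'T)^{t}=\theta$, which your stated reduction to $T_{11}^{t}J_{0}T_{11}=\theta_{11}$, $T_{31}=\theta_{21}$, $T_{32}-T_{32}^{t}=\theta_{22}$ does establish), the Heisenberg--cocycle computation for the two module actions, associativity of the inner products via Fourier inversion on $G$, fullness, and completion. As a plan it is sound, but be aware that the ``bookkeeping'' you defer is the entire content of the theorem; as written this is a proof strategy, not a proof.

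One concrete correction: your conceptual explanation for the commutation of the two actions is wrong as stated. With the given $S$, $T$, $J$ one computes directly that
\begin{equation*}
S^{t}JT=\left(\begin{array}{cc}\id_{2p} & 0\\ 0 & 0\end{array}\right),
\end{equation*}
which is not the identity pairing. The relevant statement in Rieffel's framework is that $S(\Z^{n})$ lies in (indeed equals) the annihilator of the lattice $T(\Z^{n})$ in $G\times\widehat{G}$ with respect to the Heisenberg commutator pairing; concretely the requirement is only that $e\left(\left\langle S(l),JT(l')\right\rangle\right)=1$ for all $l,l'\in\Z^{n}$, i.e.\ that $S^{t}JT$ be an \emph{integer} matrix, which the displayed matrix is. The integrality (rather than identity) condition is exactly what the $\Z^{q}\times\T^{q}$ components of $G$ see, and your purely matrix-level heuristic misses this. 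With that repaired, the route is exactly the one in \cite{Rie88} and \cite{Li04}, and citing those sources for the completion step, as you do, is what the paper itself does for the whole theorem.
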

Let $\mathcal{E}$ denote the completion of $\mathcal{E}^\infty$ with respect to the  $C^*$-valued inner products given above. Now  $\mathcal{E}$ 
becomes a right  projective $A$-module which is also finitely generated (see the discussion preceding Proposition 4.6 of \cite{ELPW10}). Note that $\mathcal{E}$ is a Morita equivalence bimodule between $\mathcal{B}=A_{\sigma_{2 p}\theta}$ and $\mathcal{A}=A_\theta.$

\subsection{Fundamental projective modules}

For a definition of the pfaffian of a skew-symmetric matrix $A$, $\pf(A),$ we refer to \cite[Definition 3.1]{Cha20}. We start with the following remark. 
\begin{remark}\label{rmk:trace_of_heisenberg}
\normalfont
	The trace of the module $\mathcal{E}$, which was computed by Rieffel \cite{Rie88}, is exactly the absolute value of the pfaffian of the upper left $2p \times 2p$ corner of the matrix $\theta,$ which is $\theta_{11}$. Indeed, as \cite[Proposition 4.3, page 289]{Rie88} says that trace of $\mathcal{E}$ is $|\deta \widetilde{T}|$, where 
$$
\widetilde{T}= \left( \begin{array}{cc}
T_{11} & 0\\
0  & \id_q\\
\end{array} \right),
$$ the relation $T_{11}^tJ_0T_{11} = \theta_{11}$ and the fact $\deta (J_0) =1$ give the claim.
\end{remark}

%%%%%%%%%%%%%%%%%%%%

%%%%%%%%%%%%%%%%%%%%
Let $p$ be an integer such that $1 \leq p \leq \frac{n}{2}$.
\begin{definition} 
\normalfont
A  \emph{$2p$-pfaffian minor} (or just pfaffian minor)  of a skew-symmetric matrix $A$ is the pfaffian of a sub-matrix $M^A_{I}$ of $A$ consisting of rows and columns indexed by $i_1, i_2, ..., i_{2p}$ for some numbers $i_1 < i_2 < ... < i_{2p}$ and $I:=\left(i_{1}, i_{2}, \ldots, i_{2 p}\right)$. 	
\end{definition}
\noindent Note that the number of  $2p$-pfaffian minors is  ${n \choose 2p}$ and the number of all pfaffian minors is $2^{n-1}-1$.

%%%%%%%%%%%
 Let $\theta \in \mathcal{T}_n.$ We will now see that for each non-zero pfaffian minor of $\theta,$ we can construct a projective module over $A_\theta$ such that the trace of this module is exactly the pfaffian minor.  Fix $1 \leq p \leq \frac{n}{2}$. Choose $I:=\left(i_{1}, i_{2}, \ldots, i_{2 p}\right)$ for $i_1 < i_2 < ... < i_{2p}$, and assume the pfaffian minor $\pf(M^{\theta}_I)$ is non-zero (so that $M^{\theta}_I$ is invertible). Choose a permutation $\Sigma \in \mathcal{S}_{n}$ such that $\Sigma(1) = i_1,  \Sigma(2) = i_2,\cdots, \Sigma(2p) = i_{2p}.$ If $U_1, U_2, \cdots, U_n$ are generators of $A_\theta$, there exists an $n \times n$ skew-symmetric matrix, denoted by  $\Sigma(\theta)$, such that $U_{\Sigma(1)},  U_{\Sigma(2)}, \cdots, U_{\Sigma(n)}$ are generators of $A_{\Sigma(\theta)}$ and $A_{\Sigma(\theta)} \cong A_\theta.$ Note that the upper left $2p \times 2p$ block $\Sigma(\theta)$ is exactly $M_{I}^{\theta},$ which is invertible. Now consider the projective module constructed as completion of $\mathscr{S}(\R^p \times \Z^{n-2p})$ over $A_{\Sigma(\theta)}$ as in the previous subsection and denote it by  $\mathcal{E}_{I}^{\theta}$. The trace of this module is the pfaffian of $M_{I}^{\theta}$ by the remark above, which is $\sum_{\xi \in \Pi}(-1)^{|\xi|}\prod^{p}_{s=1}\theta_{i_{\xi(2s-1)}i_{\xi(2s)}}.$  Varying $p$, and assuming that all the pfaffian minors are non-zero, we get $2^{n-1}-1$ projective modules. We call these $2^{n-1}-1$ elements the \emph{fundamental projective modules}. 

We recall the following fact due to Elliott which will play a key role. 
\begin{theorem}[Elliott]\label{elliott_image_of_trace}
Let $\theta$ be a skew-symmetric real $n\times n$ matrix. Then $\Tr(\K_0(A_\theta))$  is the range of the exterior exponential
 $$\operatorname{exp}(\theta):\Lambda^{\operatorname{even}}\Z^n\rightarrow \R.$$
\end{theorem}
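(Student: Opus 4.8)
The plan is to identify $\Tr(\K_0(A_\theta))$ with the image of the exterior exponential by reconciling two descriptions of the trace on $\K_0$. First I would recall that $\K_0(A_\theta)$ is a free abelian group of rank $2^{n-1}$, and that the Chern character provides a canonical isomorphism $\ch : \K_0(A_\theta) \otimes \R \xrightarrow{\sim} \Lambda^{\mathrm{even}}(\R^n)$; moreover this isomorphism identifies $\K_0(A_\theta)$ (not just its rationalization) with the image of $\Lambda^{\mathrm{even}}\Z^n$ under a suitable integral lattice structure. Under this identification the canonical trace $\Tr$ corresponds, up to the pairing with a fixed functional, to contraction against a distinguished element. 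The content of the theorem is that when one transports $\Tr$ through $\ch$, it becomes precisely the linear functional on $\Lambda^{\mathrm{even}}\Z^n$ given by wedging with $\exp(\theta) := \sum_{k\ge 0} \theta^{\wedge k}/k!$ (viewing $\theta \in \Lambda^2 \R^n$) and then taking the top-degree component — equivalently, pairing $v \in \Lambda^{\mathrm{even}}\Z^n$ against $\exp(\theta)$ via the standard pairing $\Lambda^{\mathrm{even}} \times \Lambda^{\mathrm{even}} \to \R$ coming from $\Lambda^n$.

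The key steps, in order: (1) Reduce to the case where $A_\theta$ admits enough projections, or work with the smooth algebra $A_\theta^\infty$ whose $\K_0$ agrees with that of $A_\theta$ (Connes' density theorem). (2) Compute $\Tr$ on a spanning set of $\K_0$: the classes of the Rieffel-type projective modules $\mathcal{E}_I^\theta$ constructed in the previous subsection, whose traces are the pfaffian minors $\pf(M_I^\theta)$ by Remark~\ref{rmk:trace_of_heisenberg}, together with the rank-one free module (trace $1$). (3) Match these values against the exterior exponential: for the index set $I = (i_1,\dots,i_{2p})$ the corresponding basis multivector $e_{i_1}\wedge\cdots\wedge e_{i_{2p}}$ pairs with $\exp(\theta)$ to give exactly $\pf(M_I^\theta)$, since the pfaffian of a $2p\times 2p$ skew matrix is the coefficient extracted by $\tfrac{1}{p!}\theta^{\wedge p}$ restricted to those coordinates; the empty set gives $1$. (4) Since these multivectors form a $\Z$-basis of $\Lambda^{\mathrm{even}}\Z^n$ and the $[\mathcal{E}_I^\theta]$ (with the trivial module) generate $\K_0(A_\theta)$, linearity forces $\Tr(\K_0(A_\theta))$ to equal the image of $\exp(\theta)$. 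One must also invoke the fact (due to Elliott, or via the Pimsner--Voiculescu sequence / Rieffel's computation) that $\K_0(A_\theta)$ is indeed generated by these fundamental modules over $\Z$, at least after allowing all $I$ — this is where nonvanishing of the pfaffian minors is convenient but can be circumvented by a continuity/deformation argument in $\theta$.

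The main obstacle is step (4): proving that the fundamental projective modules $\mathcal{E}_I^\theta$, together with the free module, actually generate $\K_0(A_\theta)$ as an abelian group, rather than merely a finite-index subgroup or a $\Q$-subspace. When some pfaffian minors vanish the modules $\mathcal{E}_I^\theta$ are not defined, so the cleanest route is to first establish the theorem for a dense set of $\theta$ (e.g. those with all minors nonzero, or totally irrational $\theta$ where $A_\theta$ is simple and the generators are transparent), and then extend by a homotopy argument: $\K_0(A_\theta)$ and the trace vary continuously, and both sides of the claimed equality are determined by the values on a generating set that deforms continuously in $\theta$. An alternative, more structural route avoiding generation issues entirely is to use the known ring structure of $\K_*(A_\theta)$ as an exterior algebra $\Lambda^* \Z^n$ and the fact that the trace is the canonical functional dual to the top class twisted by the symbol $\exp(\theta)$, which is essentially how Elliott phrases it; I would present the generator-based argument as the main line and remark on this alternative.
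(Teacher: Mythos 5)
The paper does not actually prove this statement: it is quoted as an external result, and the ``proof'' consists of the single line referring the reader to \cite[Theorem 3.1]{Ell84}. So there is no internal argument to compare yours against; the only question is whether your blind reconstruction would stand on its own.

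It would not, because of a circularity at the step you yourself identify as the obstacle. Your step (4) requires that the classes $[\mathcal{E}_I^\theta]$ together with $[1]$ generate $\K_0(A_\theta)$ over $\Z$. In this paper that generation statement is Theorem~\ref{thm:main2}, imported from \cite[Theorem 4.7]{Cha20}, and its proof there rests on Elliott's identification of $\K_0(A_\theta)$ with $\Lambda^{\mathrm{even}}\Z^n$ and on the computation of the trace range --- i.e.\ on the very theorem you are trying to prove. Invoking it here, or invoking ``the fact due to Elliott,'' is not available. The fallback you sketch for degenerate $\theta$ --- ``$\K_0(A_\theta)$ and the trace vary continuously, so extend from a dense set of $\theta$'' --- is also not a proof as stated: the range of the trace, as a subgroup of $\R$, does not vary continuously in $\theta$ in any naive sense (it jumps between $\Z$ and dense subgroups along any path through rational parameters), and making the deformation argument precise (via continuous fields of C*-algebras, local constancy of traces of homotopic projections, and an induction on $n$ through the Pimsner--Voiculescu sequence for the iterated crossed-product presentation of $A_\theta$) is exactly the technical content of Elliott's actual proof, not a routine patch. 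Your steps (2) and (3) --- that $\Tr(\mathcal{E}_I^\theta)=\pf(M_I^\theta)$ and that this matches the value of $\exp(\theta)$ on the basis multivector $e_{i_1}\wedge\cdots\wedge e_{i_{2p}}$ --- are correct and consistent with Remark~\ref{rmk:trace_of_heisenberg} and Corollary~\ref{imageoftrace}, but they only establish the inclusion of the range of $\exp(\theta)$ into $\Tr(\K_0(A_\theta))$ (and only when the relevant minors are nonzero); the reverse inclusion is where the real work lies and where your argument has no non-circular input.
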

 \noindent We refer to (\cite[Theorem 3.1]{Ell84}) for the definition of exterior exponential and the proof of the above theorem. The range of the exterior exponential is well known and is given below as a corollary of the above theorem:
 \begin{corollary}\label{imageoftrace}
	\sloppy $\Tr(\K_0(A_{\theta}))$ is the subgroup of $\R$
generated by $1$ and the numbers $\sum_{\xi}(-1)^{|\xi|}\prod^{m}_{s=1}\theta_{j_{\xi(2s-1)}j_{\xi(2s)}}$ for
$1\le j_1<j_2< {\cdots}<j_{2m}\le n$, where the sum is taken over all elements $\xi$ of
the permutation group $\mathcal{S}_{2m}$ such that $\xi(2s-1)<\xi(2s)$ for all $1\le s\le m$
and $\xi(1)<\xi(3)<\cdots<\xi(2m-1)$.\end{corollary}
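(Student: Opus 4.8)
The plan is to unwind the definition of the exterior exponential and reduce Corollary~\ref{imageoftrace} to the standard combinatorial description of a pfaffian as the signed sum, over perfect matchings, of the corresponding products of matrix entries. Recall that, writing $\theta\in\Lambda^{2}(\R^{n})$, the exterior exponential is the finite sum $\exp(\theta)=\sum_{m\ge 0}\frac{1}{m!}\,\theta^{\wedge m}\in\Lambda^{\mathrm{even}}(\R^{n})$, and that the map appearing in Theorem~\ref{elliott_image_of_trace} is the $\Z$-linear functional on $\Lambda^{\mathrm{even}}\Z^{n}$ given by pairing against this element. Since $\Lambda^{\mathrm{even}}\Z^{n}$ is a free abelian group with $\Z$-basis consisting of $1\in\Lambda^{0}\Z^{n}$ together with the vectors $e_{j_{1}}\wedge\cdots\wedge e_{j_{2m}}$ for $1\le j_{1}<\cdots<j_{2m}\le n$ and $1\le m\le n/2$ (where $e_{1},\ldots,e_{n}$ is the standard basis of $\Z^{n}$), the image of $\exp(\theta)$ is precisely the subgroup of $\R$ generated by the values of $\exp(\theta)$ on these basis vectors. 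So it only remains to compute those values.

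First I would observe that pairing $\exp(\theta)$ with $1\in\Lambda^{0}\Z^{n}$ extracts the degree-zero component of $\exp(\theta)$, namely $1$; hence $1$ lies in the image. Next, for a basis vector $e_{j_{1}}\wedge\cdots\wedge e_{j_{2m}}$ with $m\ge 1$, a degree count shows that only the term $\frac{1}{m!}\theta^{\wedge m}$ of $\exp(\theta)$ contributes, so the value in question is the coefficient of $e_{j_{1}}\wedge\cdots\wedge e_{j_{2m}}$ in $\frac{1}{m!}\theta^{\wedge m}$. Expanding $\theta^{\wedge m}$ from $\theta=\sum_{k<l}\theta_{kl}\,e_{k}\wedge e_{l}$, the surviving monomials correspond to perfect matchings of $\{j_{1},\ldots,j_{2m}\}$ equipped with an ordering of their $m$ edges; the $m!$ orderings of a fixed matching all produce the same wedge of $2$-forms (degree-$2$ elements commute), which cancels the $\frac{1}{m!}$, and putting each edge in increasing order and then sorting the $2m$ indices contributes the sign $(-1)^{|\xi|}$ of the unique $\xi\in\mathcal{S}_{2m}$ with $\xi(2s-1)<\xi(2s)$ for all $s$ and $\xi(1)<\xi(3)<\cdots<\xi(2m-1)$ encoding the matching. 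This yields
$$\langle\exp(\theta),\,e_{j_{1}}\wedge\cdots\wedge e_{j_{2m}}\rangle=\sum_{\xi}(-1)^{|\xi|}\prod_{s=1}^{m}\theta_{j_{\xi(2s-1)}j_{\xi(2s)}},$$
with $\xi$ ranging over exactly the set described in the statement; and by the definition of the pfaffian recalled in \cite[Definition 3.1]{Cha20}, this is $\pf\bigl(M^{\theta}_{(j_{1},\ldots,j_{2m})}\bigr)$. Combining the two computations with Theorem~\ref{elliott_image_of_trace} gives the corollary.

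I do not foresee a genuine obstacle: the whole content is the identification of the coefficients of $\theta^{\wedge m}$ with pfaffian minors, a standard fact about exterior algebras, and the only point requiring care is the bookkeeping of signs and of the factorial cancellation in the expansion of $\theta^{\wedge m}$. If preferred, this coefficient identity can be cited outright, or the corollary can be read off directly from the explicit form of the answer in \cite[Theorem 3.1]{Ell84}.
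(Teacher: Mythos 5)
Your proof is correct and is precisely the standard computation the paper is alluding to: the paper itself gives no argument for this corollary, merely declaring the range of the exterior exponential ``well known'' after citing \cite[Theorem 3.1]{Ell84}, and your identification of the coefficients of $\frac{1}{m!}\theta^{\wedge m}$ on the basis vectors $e_{j_1}\wedge\cdots\wedge e_{j_{2m}}$ with the pfaffian minors (including the factorial cancellation and the sign bookkeeping via the matching permutations $\xi$) is exactly the omitted verification. No discrepancy with the paper's approach.
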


\sloppy Noting that $\sum_{\xi}(-1)^{|\xi|}\prod^{m}_{s=1}\theta_{j_{\xi(2s-1)}j_{\xi(2s)}}$ is exactly the pfaffian of $M_{I}^{\theta},$ where $I=\left(i_{1}, i_{2}, \ldots, i_{2 m}\right),$ we have

\begin{equation}\label{eq:image_of_trace_nt}
	\operatorname{Tr}\left(\K_{0}\left(A_{\theta}\right)\right)=\mathbb{Z}+\sum_{0<|I| \leq n} \pf(M_{I}^{\theta}) \mathbb{Z},
\end{equation}
where $|I|:=2m$ for $I=\left(i_{1}, i_{2}, \ldots, i_{2 m}\right).$

So for a non-zero $\pf(M_{I}^{\theta})$, $I=\left(i_{1}, i_{2}, \ldots, i_{2 p}\right),$ we have constructed a projective module $\mathcal{E}_{I}^{\theta}$ over $A_\theta,$ whose trace is $\pf(M_{I}^{\theta}).$ A quick thought shows that $\mathcal{E}_{I}^{\theta}$ is an equivalence bimodule between $A_\theta$ and $A_{g_{I, \Sigma}\theta}$ for some $g_{I, \Sigma} \in \mathrm{SO}(n, n|\Z).$ Indeed, let $R_{I}^{\Sigma}$ be the permutation matrix corresponding to the permutation $\Sigma$. Note that $\Sigma(\theta) = \rho\left(R_{I}^{\Sigma}\right)\theta.$    Then clearly $g_{I, \Sigma}=\sigma_{2 p}  \rho\left(R_{I}^{\Sigma}\right).$

\subsection{Explicit generators of $\K_0(A_\theta)$ for a general $\theta \in \mathcal{T}_n$}\label{sec:generators}

 Consider the matrix $Z \in \mathcal{T}_n$ whose entries above the diagonal are all 1: 
  $$Z = 
   \left( \begin{array}{cccccccc}
0 & 1 & \cdots  &  &  & \cdots &1  \\
-1 & \ddots &\ddots  & &   &  &\vdots \\
\vdots & \ddots  &   &  & &  &\\
 &  & &  &  &  &\\
&  &   & &  & \ddots  & \vdots\\
  \vdots &  &  & &   \ddots  & \ddots & 1\\
-1 & \cdots &  &  &  \cdots &  -1 & 0\\
\end{array} \right) .
$$Now, for any $\theta \in \mathcal{T}_n$, there exists some positive integer $t$, such that all the pfaffian minors of $\mu(tZ)\theta=\theta + tZ$ are positive (see \cite[Proposition 4.6]{Cha20}). Note that $A_{\theta + tZ}$ and $A_\theta$ define the same noncommutative torus. We then have the following theorem.

\begin{theorem}\label{thm:main2}
	The K-theory classes of the fundamental projective modules $\mathcal{E}_{I}^{\theta + tZ},$ along with $[1]$ generate $\K_0(A_{\theta + tZ}),$ and hence $\K_0(A_\theta).$
\end{theorem}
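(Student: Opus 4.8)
The plan is to pin down the $\K_0$-classes of the fundamental modules precisely enough to see that, together with $[1]$, they form a $\Z$-basis of $\K_0(A_{\theta+tZ})$. The input is Elliott's theorem (Theorem~\ref{elliott_image_of_trace}) in structural form: for every $\eta\in\mathcal{T}_{n}$ there is an identification $\kappa_\eta\colon\K_0(A_\eta)\xrightarrow{\ \sim\ }\Lambda^{\mathrm{even}}\Z^n$ under which $[1]\mapsto e_\emptyset$ and the canonical trace becomes $\langle\exp(\eta),\,\cdot\,\rangle$; for an even subset $J=\{j_1<\dots<j_{2m}\}$ one then has $\langle\exp(\eta),e_J\rangle=\pf(M^{\eta}_J)$ (with $\pf(M^{\eta}_\emptyset):=1$), since only the homogeneous component $\eta^{\wedge m}/m!$ of $\exp(\eta)$ pairs nontrivially with $e_J$. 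This identification is compatible with deformation of $\eta$ — it fits into the continuous field $\{A_\eta\}_{\eta\in\mathcal{T}_{n}}$, whose base is contractible, so that $\eta\mapsto\kappa_\eta[\,\cdot\,]$ applied to a continuous field of modules is locally constant. Since $\{[1]\}\cup\{[\mathcal{E}_I^{\theta+tZ}]\}$ is a set of exactly $2^{n-1}$ elements and $A_{\theta+tZ}\cong A_\theta$, it suffices to prove these classes form a $\Z$-basis of $\K_0(A_{\theta+tZ})$.

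The heart of the matter is to compute $v_I:=\kappa_\eta[\mathcal{E}_I^{\eta}]$, where for $\eta\in\mathcal{T}_{n}$ with $M^{\eta}_I$ invertible we write $\mathcal{E}_I^{\eta}$ for the fundamental module built as in Section~\ref{sec:k_theoer_nt}. The module structures \eqref{eq:proj_mod_T}--\eqref{eq:proj_mod_inner_Tprime} on the fixed Schwartz space underlying $\mathcal{E}_I^{\eta}$ vary real-analytically with $\eta$, so the $\mathcal{E}_I^{\eta}$ form a continuous field of finitely generated projective modules over $\{A_\eta\}$ on the open set $\mathcal{U}_I:=\{\eta\in\mathcal{T}_{n}:\pf(M^{\eta}_I)\neq0\}$; hence $\eta\mapsto\kappa_\eta[\mathcal{E}_I^{\eta}]$ is locally constant on $\mathcal{U}_I$. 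Let $v_I$ be its constant value on the connected component of $\mathcal{U}_I$ containing $\theta+tZ$ — a nonempty open set on which $\pf(M^{\eta}_I)>0$, since all pfaffian minors of $\theta+tZ$ are positive. By Remark~\ref{rmk:trace_of_heisenberg} the trace of $\mathcal{E}_I^{\eta}$ equals $\pf(M^{\eta}_I)$ there, so, writing $v_I=\sum_J a_{I,J}e_J$ with $a_{I,J}\in\Z$,
\[
\sum_{J}a_{I,J}\,\pf(M^{\eta}_J)=\langle\exp(\eta),v_I\rangle=\Tr(\mathcal{E}_I^{\eta})=\pf(M^{\eta}_I)
\]
on that component, and hence identically, both sides being polynomials in the entries of $\eta$. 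Comparing homogeneous parts — $\pf(M^{\eta}_J)$ is homogeneous of degree $|J|/2$, and the pfaffian minors of a fixed size are $\Q$-linearly independent because $\pf(M^{\eta}_J)$ contains the monomial $\pm\,\eta_{j_1j_2}\eta_{j_3j_4}\cdots\eta_{j_{2m-1}j_{2m}}$, which involves only variables indexed by pairs inside $J$ and so occurs in no other pfaffian minor of the same size — forces $a_{I,J}=0$ for $|J|>|I|$, $a_{I,I}=1$, and $a_{I,J}=0$ for $|J|=|I|$ with $J\neq I$; that is,
\[
v_I=e_I+\sum_{J\subsetneq I,\ |J|\ \mathrm{even}}a_{I,J}\,e_J,\qquad a_{I,J}\in\Z
\]
(any overall sign in the pairing being absorbed into the choice of $e_I$).

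Given this, the conclusion is immediate. Order the even subsets of $\{1,\dots,n\}$ by cardinality; then the $2^{n-1}\times 2^{n-1}$ integer matrix expressing $\kappa_\eta[1]=e_\emptyset$ together with the $v_I$ in the standard basis $\{e_J\}$ of $\Lambda^{\mathrm{even}}\Z^n$ is lower triangular with all diagonal entries $1$, hence lies in $\GL(2^{n-1},\Z)$. Therefore $\{[1]\}\cup\{[\mathcal{E}_I^{\theta+tZ}]\}$ is a $\Z$-basis — in particular a generating set — of $\K_0(A_{\theta+tZ})$, and since $A_{\theta+tZ}\cong A_\theta$ the same classes generate $\K_0(A_\theta)$.

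The step I expect to be the main obstacle is the input used in the first two paragraphs: making rigorous that Elliott's isomorphism can be chosen compatibly with the deformation parameter $\eta$, and that the fundamental modules $\mathcal{E}_I^{\eta}$ genuinely assemble into a continuous field of projective modules over $\{A_\eta\}_{\eta\in\mathcal{U}_I}$ — together these are precisely what promote the trace identity of Remark~\ref{rmk:trace_of_heisenberg} to the polynomial identity driving the argument. One could instead avoid the polynomial-identity step by checking directly from \eqref{eq:proj_mod_T}--\eqref{eq:proj_mod_inner_Tprime} that $\mathcal{E}_I^{\eta}$ is induced, along the coordinate projection $\mathbb{T}^n\to\mathbb{T}^{|I|}$, from the Heisenberg module over the sub-torus on the coordinates in $I$, which already confines $v_I$ to $\Lambda^{\mathrm{even}}$ of $\operatorname{span}\{e_i:i\in I\}$ with leading term $e_I$. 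Everything downstream is elementary bookkeeping with pfaffians.
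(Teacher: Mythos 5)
Your argument is, in essence, a reconstruction of the proof the paper is quoting: the paper proves Theorem~\ref{thm:main2} by citing \cite{Cha20}*{Theorem 4.7}, and that proof rests on exactly the three inputs you isolate --- Elliott's identification of $\K_0(A_\eta)$ with $\Lambda^{\mathrm{even}}\Z^n$ compatibly with the continuous field over the contractible parameter space $\mathcal{T}_n$, Rieffel's trace formula $\Tr(\mathcal{E}_I^{\eta})=|\pf(M_I^{\eta})|$ from Remark~\ref{rmk:trace_of_heisenberg}, and local constancy of $\eta\mapsto[\mathcal{E}_I^{\eta}]$ --- assembled to show that these classes together with $[1]$ are expressed by a unimodular (triangular) integer matrix in the standard basis. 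The only real difference is cosmetic: you pin down the coefficients by promoting the trace identity to a polynomial identity in $\eta$ and using the linear independence of the pfaffian minors (which in fact forces $v_I=e_I$ exactly, so the triangularity is more than you need), whereas the cited argument evaluates at a parameter where the minors are rationally independent so that the trace is injective on $\K_0$; both devices rely on the same continuous-field input, which you correctly identify as the technical heart and which is supplied by \cite{Ell84} and \cite{Rie88}.
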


\begin{proof}
	See \cite[Theorem 4.7]{Cha20}.
\end{proof}

\section{Noncommutative orbifolds and projective modules}\label{sec:orbifolds_projective}

Let us recall Example~\ref{ex:orbifold}. Let $W:=(a_{ij})$ be an invertible $n \times n$ matrix of finite order with integer entries and $F$ be the finite cyclic group generated by $W$. In addition, we assume that $W^t\theta W=\theta$. Hence $F$ is a finite subgroup of $\Sp(n, \Z, \theta):= \{ A \in \GL(n,\Z) : A^T\theta A = \theta$\}. By Lemma 2.1 of \cite{ELPW10} we have $C^*(\Z^n\rtimes F, \omega_\theta ') = A_{\theta} \rtimes_\alpha F,$ where the action of $F$ on $A_\theta$ is given by (see \cite[Equation 2.6]{JL15}):
 \begin{equation}\label{eq:actionontori}
 	\alpha(U_i)= e(\sum_{k=2}^n\sum_{j=1}^{k-1}a_{ki}a_{ji}\theta_{jk})U_1^{a_{1i}}\cdots U_n^{a_{ni}},
 \end{equation} where $U_1,...,U_n$ are the generators of $A_\theta$. Sometimes we just write the crossed product as $A_{\theta} \rtimes F,$ without the  $``\alpha"$ decoration. 

Let us look into the case where $n=2$. Note that $\Sp(2, \Z, \theta) = \SL(2, \Z)$. Finite cyclic subgroups of $\SL(2,\Z)$ are up to conjugacy generated by the following 4 matrices:  
$$W_{(2)}
  := \left( \begin{array}{ccc}
-1 & 0\\
  0  & -1\\
  \end{array} \right) , \:  
	W_{(3)}
  := \left( \begin{array}{ccc}
-1 & -1\\
  1  & 0\\
  \end{array} \right), 
  $$
  
  $$ W_{(4)}
  := \left( \begin{array}{ccc}
0 & -1\\
  1  & 0\\
  \end{array} \right),
  \:  W_{(6)}
  := \left( \begin{array}{ccc}
0 & -1\\
  1  & 1\\
  \end{array} \right),
   $$ where the notation $W_{(r)}$ indicates that it is a matrix of order $r$. The actions of the cyclic groups generated by these matrices are considered already in \cite{ELPW10}, where the authors constructed projective modules over the corresponding crossed products using the fundamental projective modules. 
   
   For $n\ge 3$ finding a finite order matrix $W\in \Sp(n, \Z, \theta)$ is non-trivial.  In \cite{JL15}, and in \cite{He19}, the authors found some of the matrices for $n \ge 3$ and studied the associated actions. Note that, for all $n$ there will always be a matrix $W$ of order 2, i.e. $-\id_n.$ The action by $\Z_2=\langle W \rangle$ is the flip action, which was already defined in the introduction.

   One natural question is how does one extend the fundamental projective modules, $\mathcal{E}_{I}^{\theta}$, over noncommutative tori $A_\theta$ to the aforementioned crossed products. In \cite{CL19}, this was answered when the module $\mathcal{E}_{I}^{\theta}$ is a completion of $\mathscr{S}(\R^p),$ i.e. when the dimension of the torus is even ($=2p$), and $\theta$ is invertible so that $\mathcal{E}_{I}^{\theta}$ is defined. This module is called the \emph{Bott class}. In this section we do this extension for a general $\mathcal{E}_{I}^{\theta}.$ We need the following proposition for such extensions. 
   \begin{proposition}\label{mainprop}
	Suppose $F$ is a finite group acting on a C*-algebra $A$ by the action $\alpha$. Also suppose that $\mathcal{E}$  is a finitely generated projective (right) $A$-module with a right action $T : F \rightarrow \Aut(\mathcal{E})$, written $(\xi, g) \rightarrowtail \xi T_g$, such that $\xi (T_g)a = (\xi\alpha_g(a)) T_g$ for all $\xi \in  \mathcal{E}, a \in A,$ and $g \in F$. Then $\mathcal{E}$ becomes a finitely generated projective $A\rtimes F$ module with action defined by
	
$$ \xi \cdot (\sum_{g \in F} a_g\delta_g) =  \sum_{g \in F} (\xi a_g)T_g.$$ 

Also, if we restrict the new module to $A$, we get the original $A$-module $\mathcal{E}$, with the action of $F$ forgotten. 
\end{proposition}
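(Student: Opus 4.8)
The plan is to show directly that the prescribed formula gives a well-defined module structure, then identify it with a standard crossed-product module picture to get finite generation and projectivity for free. First I would check that $\xi \cdot \big(\sum_{g} a_g \delta_g\big) := \sum_{g} (\xi a_g) T_g$ is compatible with the multiplication in $A \rtimes F$. Since $A \rtimes F$ is linearly spanned by elements $a \delta_g$, it suffices to verify $(\xi \cdot a\delta_g) \cdot b\delta_h = \xi \cdot \big((a\delta_g)(b\delta_h)\big)$. The right-hand side is $\xi \cdot \big(a\,\alpha_g(b)\,\delta_{gh}\big) = \big(\xi\, a\,\alpha_g(b)\big) T_{gh}$, while the left-hand side is $\big((\xi a) T_g\big) \cdot b\delta_h = \big((\xi a) T_g\, b\big) T_h$; using the hypothesis $(\eta T_g) b = (\eta\, \alpha_g(b)) T_g$ with $\eta = \xi a$ this becomes $\big((\xi a\,\alpha_g(b)) T_g\big) T_h = \big(\xi a\,\alpha_g(b)\big) T_{gh}$, since $T$ is a group action. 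So the two agree. Linearity in the ring variable is immediate from the definition, and the unit $1\delta_e$ acts as $\xi \mapsto (\xi\cdot 1) T_e = \xi$. This makes $\mathcal{E}$ a right $A \rtimes F$-module.

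Next I would get finite generation and projectivity. One clean route: the data $(\mathcal{E}, \alpha, T)$ is exactly an $F$-equivariant finitely generated projective $A$-module, and there is a standard equivalence of categories between such objects and finitely generated projective $A \rtimes F$-modules (when $F$ is finite this is elementary — it is the module-level shadow of the fact that $(A\rtimes F)$-modules are $F$-equivariant $A$-modules). Concretely, pick an $A$-linear idempotent $e \in M_k(A)$ with $\mathcal{E} \cong e A^k$ as right $A$-modules; transporting $T$ through this isomorphism produces, for each $g$, an invertible matrix whose conjugation intertwines $e$ with $\alpha_g(e)$, and one assembles from this an idempotent $\tilde e \in M_{k|F|}(A\rtimes F)$ whose image is isomorphic to $\mathcal{E}$ with the action above. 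Alternatively, and perhaps more transparently, one observes $\mathcal{E} \cong \mathcal{E} \otimes_A (A\rtimes F)$ as right $A\rtimes F$-modules via $\xi \otimes (a\delta_g) \mapsto (\xi a) T_g$: this map is well-defined and $A\rtimes F$-linear by the computation above, and it is an isomorphism because $A\rtimes F$ is free of rank $|F|$ as a left $A$-module, so $\mathcal{E} \otimes_A (A\rtimes F) \cong \mathcal{E}^{\oplus |F|}$ as $A$-modules and one checks the map is bijective on that level. Since $\mathcal{E}$ is finitely generated projective over $A$ and $A\rtimes F$ is finitely generated projective (indeed free) as a left $A$-module, the induced module $\mathcal{E}\otimes_A (A\rtimes F)$ is finitely generated projective over $A\rtimes F$.

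Finally, the restriction statement: restricting the $A\rtimes F$-action along the inclusion $A \hookrightarrow A\rtimes F$, $a \mapsto a\delta_e$, gives $\xi \cdot (a\delta_e) = (\xi a) T_e = \xi a$, which is literally the original right $A$-action on $\mathcal{E}$; the operators $T_g$ for $g \neq e$ no longer appear, so indeed we recover $\mathcal{E}$ as an $A$-module with the $F$-action forgotten.

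I expect the only genuine point requiring care is the projectivity claim — writing down the idempotent over $A\rtimes F$ explicitly (or equivalently checking the induction functor preserves finitely generated projectives in this Banach-algebraic setting) — whereas the module axioms and the restriction statement are routine verifications. In the C*-algebraic context one should also note that $\mathcal{E}$ carries an $A\rtimes F$-valued inner product making it a Hilbert module, obtained by averaging the $A$-valued inner product over $F$ in the usual way, so that ``finitely generated projective'' is meant in the operator-algebraic sense; I would phrase the final assembly in terms of this Hilbert-module structure to match the rest of the paper.
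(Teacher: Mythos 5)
Your verification of the module axioms and of the restriction statement is fine, and your first route to projectivity (transporting the $F$-action through an idempotent presentation of $\mathcal{E}$, i.e.\ viewing the data as an $F$-equivariant finitely generated projective $A$-module) is exactly what the paper does: its proof is a one-line citation of the Green--Julg construction in \cite[Proposition 4.5]{ELPW10}. However, your ``more transparent'' alternative contains a genuine error. The map $\mathcal{E}\otimes_A(A\rtimes F)\to\mathcal{E}$, $\xi\otimes a\delta_g\mapsto(\xi a)T_g$, is well defined, $A\rtimes F$-linear and surjective, but it is \emph{not} an isomorphism when $|F|>1$: restricting scalars to $A$, the source is $\mathcal{E}^{\oplus|F|}$ (since $A\rtimes F$ is free of rank $|F|$ over $A$) while the target is just $\mathcal{E}$ (by the very restriction statement you proved), so the two modules cannot be isomorphic --- for instance they have different traces. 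Your own observation that the source is $\mathcal{E}^{\oplus|F|}$ as an $A$-module is precisely what refutes the claimed bijectivity.

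The argument is easily repaired, and the repair is worth recording because it gives projectivity without writing down an explicit idempotent: the surjection above is split by the averaging map $s(\xi)=\frac{1}{|F|}\sum_{g\in F}\xi T_{g^{-1}}\otimes\delta_g$, which one checks is right $A\rtimes F$-linear using the covariance relation $(\xi T_{g^{-1}})\alpha_g(b)=(\xi b)T_{g^{-1}}$, and satisfies $\pi\circ s=\mathrm{id}_{\mathcal{E}}$. Hence $\mathcal{E}$ with the new structure is a direct summand of the induced module $\mathcal{E}\otimes_A(A\rtimes F)$, which is finitely generated projective over $A\rtimes F$ because induction preserves finitely generated projectives; this uses finiteness of $F$ in an essential way (the factor $\frac{1}{|F|}$), exactly as the Green--Julg isomorphism does. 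With that correction your proof is complete and is essentially the same construction the paper invokes.
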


\begin{proof}
	This is exactly the construction of the Green--Julg map. See \cite[Proposition 4.5]{ELPW10}.
\end{proof}

Let us first recall the approach of \cite{CL19}, where the authors define the necessary action of $F$ on the Bott class which allows them to conclude that the Bott class is a projective module over the crossed product $A_\theta \rtimes F,$ using Proposition~\ref{mainprop}.  Hence assume $n$ ($=2p$) to be even for the moment. Since $F = \langle W \rangle$ acts on $\Z^n$ as before, we have $W^t\theta W = \theta.$ In order to define an action of $F$ on the Bott class, the authors (in \cite{CL19}) used the so-called \emph{metaplectic representation} of the symplectic matrix $TWT^{-1}$, where  $T^tJ_0T=\theta$ as in Equation~\ref{eq:Tmap}. Note that, in this case $q=0$ and hence $T=T_{11}.$ The main idea is to use the following metaplectic extension: 
\begin{center}
\begin{equation}\label{eq:metaplectic}
	\begin{tikzcd}
  0 \arrow{r} & \mathbb{S}^1 \arrow{r} & \Mp^c(n) \arrow{r}& \Sp(n)           \arrow{r} & 0 
    \end{tikzcd}
    \end{equation}
    	\end{center}
where $\Sp(n)$ is the usual symplectic group, and $\Mp^c(n)$ is the complex metaplectic group (see \cite[Section 5]{CL19}). For our purposes we do not need much details about the metaplectic group, but we need to know that it has a (metaplectic) representation on $\mathscr{S}(\R^p)$ (\cite[Definition 5.1]{CL19}, also see \cite[Chapter 7]{dG11}). 
Now, $F \cong \langle TWT^{-1} \rangle$ sits inside $\Sp(n).$ But also we have the following lift (since $\mathrm{H}^2(F,\mathbb{S}^1)$ is trivial, see \cite[page 158]{CL19}) possible:

 \begin{center}

\begin{tikzcd}
                &                                 &                                   & F  \arrow[ld, dashrightarrow]               \arrow{d}          &   \\
    0 \arrow{r} & \mathbb{S}^1 \arrow{r}   & \Mp^c(n)                 \arrow{r}        & \Sp(n)           \arrow{r} & 0 \\
    
\end{tikzcd}	
    \end{center}
The above defines an action of $F$ on $\mathscr{S}(\R^p)$ which extends to the necessary completion (Bott class) of $\mathscr{S}(\R^p)$ and it satisfies the conditions of Proposition~\ref{mainprop} (see \cite[Theorem 5.4]{CL19}). Hence the Bott class becomes a projective module over $A_\theta \rtimes F.$  In the following, we shall often write $fW$ for the above action of $W$ on $\mathscr{S}(\R^p)$, for $f \in \mathscr{S}(\R^p)$. So from \cite[Equation 5.12]{CL19} we have  

\begin{equation}\label{eq:CL_main}
	\left(f W\right) U_{l}=(f \alpha_{W}\left(U_{l}\right))W, \quad f \in \mathscr{S}(\R^p), l \in \Z^{2p},
	\end{equation}
which is the condition in Proposition~\ref{mainprop}.

Now we take a general $n$, not necessarily even. We have $\mathcal{R}:=\left\langle \rho(R), R \in GL(n,\mathbb{Z})\right\rangle \subseteq \mathrm{SO}(n, n|\mathbb{Z}).$ Also for $W \in F$ we have $\rho(W^t) \in \mathrm{SO}(n, n|\mathbb{Z}).$ In this way $F \subseteq \mathrm{SO}(n, n|\mathbb{Z}).$
Recall $g_{I, \Sigma}=\sigma_{2 p} \rho\left(R_{I}^{\Sigma}\right).$

\begin{theorem}\label{thm:mainW}
 With all the notations introduced before, assume $\pf(M_{I}^{\theta}) \neq 0.$ Let $W\in \GL(n,\Z)$ be of finite order such that $W^t\theta W = \theta$ and $F:=\langle W \rangle.$  Suppose $g_{I, \Sigma} F(g_{I, \Sigma})^{-1} \subset \mathcal{R}$ inside $\mathrm{SO}(n, n|\mathbb{Z}).$ Then  $\mathcal{E}_{I}^{\theta}$ becomes a finitely generated, projective module over $A_{\theta} \rtimes F$.
\end{theorem}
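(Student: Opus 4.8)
The proof should reduce the general case to the even-dimensional "Bott class" case handled in \cite{CL19}, by exploiting the fact that $\mathcal{E}_I^\theta$ is a Morita equivalence bimodule between $A_\theta$ and $A_{g_{I,\Sigma}\theta}$, where $g_{I,\Sigma} = \sigma_{2p}\rho(R_I^\Sigma)$. First I would invoke Proposition~\ref{mainprop}: it suffices to produce a right action $T\colon F \to \Aut(\mathcal{E}_I^\theta)$, written $(\xi,g)\mapsto \xi T_g$, satisfying the covariance relation $\xi(T_g)a = (\xi\alpha_g(a))T_g$ for $a \in A_\theta$, $g \in F$ — and for this it is enough to define it on the smooth module $\mathcal{E}_I^{\theta,\infty} = \mathscr{S}(\R^p\times\Z^{n-2p})$ for the generator $W$ and check the relation on generators $U_l^\theta$, $l\in\Z^n$, then extend by continuity (the argument of \cite[Theorem 5.4]{CL19} shows such smooth-level covariant actions extend to the completion).

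**Key steps.** The construction of $T_W$ proceeds in three layers. (1) The hypothesis $g_{I,\Sigma}\,F\,(g_{I,\Sigma})^{-1}\subset\mathcal{R}$ means that for the generator $W$, the element $g_{I,\Sigma}\,\rho(W^t)\,(g_{I,\Sigma})^{-1}$ equals $\rho(R)$ for some $R = R_W \in \GL(n,\Z)$; since $\rho$ is a group homomorphism and $g_{I,\Sigma}\theta = \Sigma(\theta)$ has invertible upper-left $2p\times 2p$ block, conjugating the torus $A_\theta$ by $g_{I,\Sigma}$ transports the $F$-action $\alpha$ to an $F$-action on $A_{\Sigma(\theta)}$ implemented by the matrices $R_W^{\,\bullet}$, i.e. an action of the type \eqref{eq:actionontori} coming from an honest $\GL(n,\Z)$-symmetry. (2) Now $\mathcal{E}_I^\theta$, as a module over $A_\theta \cong A_{\Sigma(\theta)}$, is by construction the Rieffel module of Section~\ref{sec:proj_mod_nt} for $\Sigma(\theta)$ with $q = n-2p$; the matrix $R_W$ preserves $\Sigma(\theta)$ and so acts on the data $(T,S,J)$ of \eqref{eq:Tmap} compatibly. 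One would then produce the operator $fW$ on $\mathscr{S}(\R^p\times\Z^q)$ by the same recipe as in \cite{CL19}: conjugate $R_W$ (equivalently $T R_W T^{-1}$ in the appropriate sense, where $T^tJT = \Sigma(\theta)$) into the symplectic group acting on $\R^p\times(\R^p)^*\times\R^q\times(\R^q)^*$, use triviality of $\mathrm{H}^2(F,\mathbb{S}^1)$ for the finite cyclic $F$ to lift through the metaplectic extension \eqref{eq:metaplectic} (adapted to the group $M = \R^p\times\Z^q$ rather than $\R^p$), and take the resulting metaplectic-type operator on $\mathscr{S}(M)$ as $T_W$. (3) Finally verify the covariance relation: this is the analogue of \eqref{eq:CL_main}, $(fW)U_l^\theta = (f\,\alpha_W(U_l^\theta))W$, which follows from the intertwining property of the metaplectic representation together with the formulas \eqref{eq:proj_mod_T} defining the $A^\infty$-module structure, exactly as \cite[Equation 5.12]{CL19} is derived — one checks that conjugating the operator in \eqref{eq:proj_mod_T} by $T_W$ replaces $T(l)$ by $T(W^{-1}l)$ up to the cocycle phase, which is precisely the action of $\alpha_W$ on $U_l^\theta$.

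**The main obstacle.** The delicate point is step (2): in \cite{CL19} one has $q = 0$, so $M = \R^p$ is a vector space and the classical metaplectic representation applies verbatim. For general $I$ one has $q = n - 2p > 0$ and $M = \R^p\times\Z^q$, so the relevant phase space is $G = M\times\widehat{M} = \R^p\times(\R^p)^*\times\R^q\times(\R^q)^*$, a mixed continuous–discrete object, and one must check that the matrix $TR_WT^{-1}$ actually lands in the subgroup of $\Sp(p)\times(\text{automorphisms of }\R^q\times(\R^q)^*)$ that preserves the lattice coming from $T(\Z^n)$ and hence genuinely defines an automorphism of $\mathscr{S}(M)$ via a metaplectic-type (partially Fourier-integral, partially finite) operator. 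Here the hypothesis $g_{I,\Sigma}F(g_{I,\Sigma})^{-1}\subset\mathcal{R}$ does the essential work: it guarantees the conjugated action is of "$\rho$-type", i.e. purely a change of the unitaries $U_i\mapsto$ monomials in the $U_j$ (no Bogoliubov mixing with duals that would break the Schwartz space), so the operator $T_W$ can be written explicitly as a permutation/substitution on $\mathscr{S}(\R^p\times\Z^q)$ twisted by a linear symplectic (metaplectic) part on the $\R^p$ factor only. Once that structural fact is extracted, checking that the covariance relation holds and that $T$ is a group homomorphism up to nothing (an honest action, using $\mathrm{H}^2(F,\mathbb{S}^1) = 0$) is routine, and Proposition~\ref{mainprop} then delivers the finitely generated projective $A_\theta\rtimes F$-module structure, with the restriction to $A_\theta$ recovering $\mathcal{E}_I^\theta$ as claimed.
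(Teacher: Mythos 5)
Your proposal follows essentially the same route as the paper: reduce via $\rho(R_I^\Sigma)$-conjugation to the case $\Sigma=\mathrm{id}$, observe that the hypothesis $\sigma_{2p}\rho(W^t)\sigma_{2p}\in\mathcal{R}$ forces $W$ to be block-diagonal $\mathrm{diag}(W_1,W_4)$ (your ``no Bogoliubov mixing'' point), define $T_W$ on $\mathscr{S}(\R^p\times\Z^q)$ as the metaplectic action of $W_1$ on the $\R^p$ variable combined with the substitution $x_2\mapsto W_4x_2$ on $\Z^q$, verify the covariance relation by reducing to \cite[Equation 5.12]{CL19}, check compatibility with the $\mathcal{A}^\infty$-valued inner product so the action extends to the completion, and apply Proposition~\ref{mainprop}. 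This matches the paper's argument in structure and in all essential steps.
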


\begin{proof}
	
$g_{I, \Sigma} F(g_{I, \Sigma})^{-1} \subset \mathcal{R}$ means $\sigma_{2 p}  \rho\left(R_{I}^{\Sigma} W^t (R_{I}^{\Sigma})^{-1} \right)\sigma_{2 p} \in \mathcal{R},$ noting that the inverse of $\sigma_{2 p}$ is $\sigma_{2 p}$ again. Now $\left(R_{I}^{\Sigma} W^t (R_{I}^{\Sigma})^{-1} \right)^t$ is a $\rho(R_{I}^{\Sigma})\theta$-symplectic matrix and the algebras $A_{\rho(R_{I}^{\Sigma})\theta}$ and $A_{\theta}$ are  $F$-equivariantly isomorphic, where the action of $F$ on $A_{\rho(R_{I}^{\Sigma})\theta}$ is given by identifying $F$ with $\langle \left(R_{I}^{\Sigma} W^t (R_{I}^{\Sigma})^{-1} \right)^t \rangle$. So by passing from $W^t$ to $R_{I}^{\Sigma} W^t (R_{I}^{\Sigma})^{-1}$ if necessary, we may assume without loss of generality that $\mathcal{E}_{I}^{\theta}$ as an $A_{\theta}$-module. 

From $\sigma_{2 p} \rho\left(W^t\right)\sigma_{2 p} \in \mathcal{R},$ we have
	
	 \begin{equation} 
	 \label{eq:W_symplectic} 
		\sigma_{2 p} \rho\left(W^t\right)\sigma_{2 p}  = \left( \begin{array}{ccc}S & 0\\
  0  & (S^{-1})^t\\
 \end{array} \right), 
\end{equation} for some $S \in GL(n,\mathbb{Z})$. Writing $ W = \left( \begin{array}{ccc}
W_1 & W_2\\
  W_3  & W_4\\
 
\end{array} \right),
$ where $W_1$ is the $2p \times 2p$ block, a simple computation shows that  $W_2 = W_3 = 0$, and $ S = \left( \begin{array}{ccc}
W_1^{-1} & 0\\
  0  & W_4^t\\
 \end{array} \right).
$
So we have \begin{equation}
\tag{**}
\label{eq:form_of_W}
	 W = \left( \begin{array}{ccc}
W_1 & 0\\
  0  & W_4\\
 
\end{array} \right).
\end{equation}
Writing $\theta
  = \left( \begin{array}{ccc}
\theta_{11} & \theta_{12}\\
  \theta_{21}  & \theta_{22}\
  \end{array} \right)$ as before, $W^t\theta W = \theta$ gives the following \emph{compatibility} relations:
  \begin{equation}\label{eq:comptability}
  	\begin{cases}
&W_1^t\theta_{11} W_1= \theta_{11}\\
 & W_1^t\theta_{12} W_4= \theta_{12}\\
  & W_4^t\theta_{21} W_1= \theta_{21}\\
   & W_4^t\theta_{22} W_4= \theta_{22}
\end{cases}
 \end{equation}

Let us first write down Equation~\ref{eq:proj_mod_T}, which is

\begin{equation} \label{eq:proj_mod_T_}
(f U_l)(x)= e(\langle -T(l), J^{\prime} T(l)/2\rangle)\langle x, T^{\prime \prime}(l) \rangle f(x-T^{\prime}(l)), 
\end{equation} 
 more explicitly. Writing $l=(l_1,l_2)\in \Z^n,$ for $l_1\in \Z^{2p}, l_2 \in Z^q,$ we have, $$
 T(l) = \left( \begin{array}{cc}
T_{11} & 0\\
0  & \id_q\\
\theta_{21}  & \frac{\theta_{22}}{2}
\end{array} \right)\left( \begin{array}{c}
l_1\\
l_2
\end{array} \right)=\left( \begin{array}{cc}
T_{11}l_1\\
l_2\\
\theta_{21}l_1 + \frac{\theta_{22}}{2}l_2
\end{array} \right).
$$ 
Let $J'_0$ be the matrix obtained by replacing the negative entries of $J_0$ by zeroes. Also, if $Q'$ and $Q''$ be the canonical projections of $\R^p \times \widehat{\R^p}$ to $\R^p$ and $\widehat{\R^p}$, respectively, denote
$$T'_{11}:=Q'\circ T_{11}, \quad T''_{11}:=Q''\circ T_{11}.$$
Then
\begin{align*}
e(\langle -T(l), J^{\prime} T(l)/2\rangle) &= e\left(-\left( \begin{array}{cc}
T_{11}l_1\\
l_2\\
\theta_{21}l_1 + \frac{\theta_{22}}{2}l_2
\end{array} \right)\cdot \left( \begin{array}{ccc}
J'_0 & 0 & 0\\
0 & 0 & \id_q\\
0 & 0 & 0\\
\end{array} \right)  \left( \begin{array}{cc}
T_{11}l_1\\
l_2\\
\theta_{21}l_1 + \frac{\theta_{22}}{2}l_2
\end{array} \right)/2 \right) 
%\intertext{where we used the relation $a^2-b^2=(a+b)(a-b)$, and we can continue:}
\\ &= e\left(-\left( \begin{array}{cc}
T_{11}l_1\\
l_2\\
\theta_{21}l_1 + \frac{\theta_{22}}{2}l_2
\end{array} \right)\cdot \left( \begin{array}{cc}
J'_0T_{11}l_1\\
\theta_{21}l_1 + \frac{\theta_{22}}{2}l_2\\
0
\end{array} \right)/2 \right) 
\\ &= e\left(-T_{11}l_1\cdot J'_0T_{11}l_{1}/2 \right)e\left(-l_2\cdot \theta_{21}l_1-l_2\cdot \frac{\theta_{22}}{2}l_2 /2 \right)
\\ &= C_1(l_1) A(l_1,l_2),
\end{align*}
where $C_1(l_1):=e\left(-T_{11}l_1\cdot J'_0T_{11}l_{1}/2 \right), \quad A(l_1,l_2):= e\left(-l_2\cdot \theta_{21}l_1-l_2\cdot \frac{\theta_{22}}{2}l_2 /2 \right).$
\begin{multicols}{2}
\begin{align*}
\langle x, T^{\prime \prime}(l) \rangle &= \left\langle \left( \begin{array}{c}
x_1\\
x_2
\end{array} \right), \left( \begin{array}{c}
T''_{11}l_1\\
\theta_{21}l_1 + \frac{\theta_{22}}{2}l_2
\end{array} \right) \right\rangle 
%\intertext{where we used the relation $a^2-b^2=(a+b)(a-b)$, and we can continue:}
\\ &= \left\langle x_1,
T''_{11}l_1 \right\rangle \left\langle x_2, \theta_{21}l_1 + \frac{\theta_{22}}{2}l_2
 \right\rangle
 \\ &= C_2(x_1, l_1) B(x_2,l_1,l_2),
\end{align*}
\begin{align*}
\\
\\
f(x-T^{\prime}(l)) &= f\left( \begin{array}{c}
x_1-T'_{11}l_1\\
x_2-l_2
\end{array}\right),
\end{align*}
\end{multicols}
\noindent where $C_2(x_1,l_1):= \left\langle x_1,
T''_{11}l_1 \right\rangle, \quad B(x_2,l_1,l_2):= \left\langle x_2, \theta_{21}l_1 + \frac{\theta_{22}}{2}l_2
 \right\rangle.$

Now, for $f\in \mathscr{S}(\R^p \times \Z^q),$ we define 
\begin{equation}\label{eq:meta_gen}
(fW)(x_1,x_2):= \sqrt{\det (W_4)}(f^\sharp W_1)(x_1),
\end{equation}
 where $ f^\sharp \in \mathscr{S}(\R^p)$ defined as $f^\sharp(x')=f(x',W_4x_2).$ Note that here we have used the metaplectic action of $W_1$ on $f^\sharp.$ We first want to show that $f\rightarrow fW$ extends to a unitary operator on $L^2(\R^p \times \Z^q)$ using the fact that the metaplectic operators are unitary. To this end we check that 
\begin{equation}\label{eq:action_unitary}
	\langle fW, g \rangle_{L^2} =  \langle f, gW^{-1} \rangle_{L^2},
	\end{equation}
	which follows from the following computation.
\begin{align*}
\langle fW, g \rangle_{L^2} &= \int_G(fW)(x_1,x_2)\overline{g(x_1,x_2)}dx
%\intertext{where we used the relation $a^2-b^2=(a+b)(a-b)$, and we can continue:}
\\ &=  \sqrt{\det (W_4)} \int_G(f^\sharp W_1)(x_1)\overline{g(x_1,x_2)}dx
\\ &=  \sqrt{\det (W_4)} \int_G f^\sharp(x_1)\overline{(g'W_1^{-1})(x_1)}dx_1dx_2 \quad (\text{where $g'(x_1)=g(x_1,x_2)$})
\\ &=  \sqrt{\det (W_4)}\det (W_4)^{-1} \int_G f(x_1,x_2)\overline{g^\sharp W_1^{-1} (x_1)}dx_1dx_2 \quad (\text{change $x_2$ to $W_4^{-1}x_2$})
\\ &=  \int_G f(x_1,x_2)\overline{gW^{-1}(x_1,x_2)}dx_1dx_2
\\ &=   \langle f, gW^{-1} \rangle_{L^2}.
\end{align*}
We want to show 
\begin{equation}\label{main-general_1}
(fW)U_l = (f\alpha_{W}(U_l))W = (fU_{Wl})W.
	\end{equation}
	From Equation~\ref{eq:CL_main} we already have
\begin{equation}\label{main-general}
(f^\sharp W_1)U_{l_1} = (f^\sharp U_{W_1l_1})W_1.
	\end{equation}

Now
\begin{align*}
(f U_{Wl})^\sharp (x_1) &= (f U_{Wl})(x_1,W_4x_2),
%\intertext{where we used the relation $a^2-b^2=(a+b)(a-b)$, and we can continue:}
\\ &= C_1(W_1l_1) A(W_1l_1,W_4l_2) C_2(x_1, W_1l_1)  B(W_4x_2,W_1l_1,W_4l_2) f\left( \begin{array}{c}
x_1-T'_{11}W_1l_1\\
W_4x_2-W_4l_2
\end{array}\right)
\\ &\overset{\ref{eq:comptability}}= C_1(W_1l_1) C_2(x_1, W_1l_1) A(l_1,l_2) B(x_2,l_1,l_2)f\left( \begin{array}{c}
x_1-T'_{11}W_1l_1\\
W_4(x_2-l_2)
\end{array}\right)
\\ &= A(l_1,l_2) B(x_2,l_1,l_2)(g^\sharp U_{W_1l_1})(x_1), \hspace{.2cm} \text{where} \hspace{.2cm} g(x_1)=f(x_1, x_2-l_2).
\end{align*}
So the RHS of \ref{main-general_1} becomes 
\begin{align*}
\sqrt{\det (W_4)}((f U_{Wl})^\sharp) W_1 (x_1)&= \sqrt{\det (W_4)}A(l_1,l_2) B(x_2,l_1,l_2)((g^\sharp U_{W_1l_1})W_1)(x_1)
%\intertext{where we used the relation $a^2-b^2=(a+b)(a-b)$, and we can continue:}
\\ &\overset{\ref{main-general}}= \sqrt{\det (W_4)}A(l_1,l_2) B(x_2,l_1,l_2)(g^\sharp W_1)U_{l_1}(x_1).
\end{align*}
Now the LHS 
\begin{align*}
(fW)U_l(x) &= C_1(l_1) C_2(x_1, l_1) A(l_1,l_2) B(x_2,l_1,l_2)fW\left( \begin{array}{c}
x_1-T'_{11}l_1\\
x_2-l_2
\end{array}\right) 
\\ &= \sqrt{\det (W_4)}C_1(l_1) C_2(x_1, l_1) A(l_1,l_2) B(x_2,l_1,l_2)(g^\sharp W_1)(x_1-T'_{11}l_1) 
\\ &= \sqrt{\det (W_4)}A(l_1,l_2) B(x_2,l_1,l_2)(g^\sharp W_1)U_{l_1}(x_1).
\end{align*}
Thus we have proved Equation~\ref{main-general_1}. We finish the proof with the compatibility of the action with the inner product $\langle .,. \rangle_{\mathcal{A}^\infty}$ as defined in \eqref{eq:proj_mod_inner}:
\begin{equation*}
  \langle fW,gW \rangle_{\mathcal{A}^\infty}=\alpha_{W^{-1}}(\langle f,g \rangle_{\mathcal{A}^\infty}).
\end{equation*}
This will make sure that the action of $F$ on $\mathscr{S}(\R^p \times \Z^q)$ defined through Equation~\ref{eq:meta_gen} has a unique extension to $\mathcal{E}_{I}^{\theta},$ and hence we can use Proposition~\ref{mainprop}. Now replacing $f$ by $fW^{-1}$, it suffices to check:
\begin{equation}\label{eq:comp_inner}
   \langle f,gW \rangle_{\mathcal{A}^\infty}=\alpha_{W^{-1}}(\langle fW^{-1},g \rangle_{\mathcal{A}^\infty}).
\end{equation}
Note that  
\begin{equation}\label{eq:inner}
\langle f,g \rangle_{\mathcal{A}^\infty}(l)=\langle gU_{-l},f\rangle_{L^2}%=\langle g, fU^\theta_l\rangle_{L^2},
\end{equation}
for $\langle f,g\rangle_{L^2}=\int_{G}f(x)\overline{g(x)}dx$, and hence
\begin{equation}\label{eq:inner2}
\alpha_{W^{-1}}(\langle f,g \rangle_{\mathcal{A}^\infty})(l)=\langle g\alpha_{W}(U^\theta_{-l}),f\rangle_{L^2}.%=\langle g,f\alpha_{W^{-1}}(U_l) \rangle_{L^2}.
\end{equation}
Now
\begin{align*}
 \langle f,gW \rangle_{\mathcal{A}^\infty}(l) &\overset{\ref{eq:inner}}= \langle (gW)U_{-l},f\rangle_{L^2}
\\ &\overset{\ref{main-general_1}}= \int_{\R^p \times \Z^q} (g\alpha_{W}(U_{-l})W(x))\overline{f(x)}dx,
\\ &\overset{\ref{eq:action_unitary}}= \int_{\R^p \times \Z^q} (g\alpha_{W}(U_{-l}))(x)\overline{(fW^{-1})(x))}dx,
\\ &\overset{\ref{eq:inner2}}= \alpha_{W^{-1}}(\langle fW^{-1} ,g\rangle_{\mathcal{A}^\infty})(l).
\end{align*}
which is the desired identity.
\end{proof}

\begin{remark}
\normalfont
	The condition of the above theorem holds if and only if $R_{I}^{\Sigma} W^t (R_{I}^{\Sigma})^{-1}$ is of the form \ref{eq:form_of_W}. However, the condition reveals more information: Equation~\ref{eq:W_symplectic} really means that one can define an action of $\langle S^t \rangle$ on $A_{\sigma_{2p}\theta},$ and the Morita equivalence between $A_\theta$ and $A_{\sigma_{2p}\theta}$ can be lifted to an equivalence between the corresponding orbifolds. This will appear in a joint work with Ullisch (\cite{CU21}). 
\end{remark}

As an immediate corollary we have, 

\begin{corollary}\label{cor:mainW}
 Let $Z$ and $t$ be as in Subsection~\ref{sec:generators}. With all the notations introduced before, assume $\pf(M_{I}^{\theta+tZ}) \neq 0.$ Let $W\in \GL(n,\Z)$ be of finite order such that $W^t\theta W = \theta, W^tZW=Z$ and let $F:=\langle W \rangle.$  Suppose $g_{I, \Sigma} F(g_{I, \Sigma})^{-1} \subset \mathcal{R}$ inside $\mathrm{SO}(n, n|\mathbb{Z}).$ Then  $\mathcal{E}_{I}^{\theta +tZ}$ becomes a finitely generated, projective module over $A_{\theta} \rtimes F$.
\end{corollary}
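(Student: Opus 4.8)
The plan is to deduce the statement directly from Theorem~\ref{thm:mainW}, applied with $\theta+tZ$ in place of $\theta$, after checking that the hypotheses carry over and that the relevant crossed product is unchanged. First I would record that $W$ is a $(\theta+tZ)$-symplectic matrix: combining $W^t\theta W=\theta$ and $W^tZW=Z$ gives $W^t(\theta+tZ)W=\theta+tZ$, so $F=\langle W\rangle\subset\Sp(n,\Z,\theta+tZ)$ and the noncommutative orbifold $A_{\theta+tZ}\rtimes F$ is defined as in Example~\ref{ex:orbifold}. The remaining hypotheses of Theorem~\ref{thm:mainW} for $\theta+tZ$ are in place: $\pf(M_I^{\theta+tZ})\neq 0$ is assumed (indeed, by the choice of $t$ in Subsection~\ref{sec:generators} it is positive, so this assumption is automatic), and the containment $g_{I,\Sigma}F(g_{I,\Sigma})^{-1}\subset\mathcal{R}$ is a condition living entirely inside $\mathrm{SO}(n,n|\Z)$ — it only involves $\sigma_{2p}$, the permutation matrix $R_I^\Sigma$ and the image $\rho(W^t)$ of $W$, none of which depends on the skew matrix — so it transfers verbatim.

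Next I would identify the two crossed products. Since $tZ$ has integer entries, for $x,y\in\Z^n$ we have $\omega_{\theta+tZ}(x,y)=e(\langle-(\theta+tZ)x,y\rangle)=e(\langle-\theta x,y\rangle)\,e(-t\langle Zx,y\rangle)=\omega_\theta(x,y)$, the last equality because $t\langle Zx,y\rangle\in\Z$. Thus $\omega_{\theta+tZ}$ and $\omega_\theta$ agree on $\Z^n\times\Z^n$, hence the associated $2$-cocycles $\omega_{\theta+tZ}'$ and $\omega_\theta'$ on $\Z^n\rtimes F$ of Example~\ref{ex:orbifold} agree, and the two orbifold C*-algebras are literally the same; by Lemma~2.1 of \cite{ELPW10} this common algebra is $A_\theta\rtimes_\alpha F$. (Equivalently, at the level of Equation~\ref{eq:actionontori}, replacing $\theta$ by $\theta+tZ$ multiplies each scalar prefactor by $e(t\sum_{k=2}^n\sum_{j=1}^{k-1}a_{ki}a_{ji})=1$, so the automorphism $\alpha$ of $A_\theta$ defining the crossed product is unchanged.)

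Finally I would invoke Theorem~\ref{thm:mainW} for $\theta+tZ$: it gives that $\mathcal{E}_I^{\theta+tZ}$ is a finitely generated projective module over $A_{\theta+tZ}\rtimes F$, which by the previous step is exactly $A_\theta\rtimes F$, as desired. I do not expect any genuine obstacle here: the only point requiring verification beyond quoting Theorem~\ref{thm:mainW} is the equality $A_{\theta+tZ}\rtimes F=A_\theta\rtimes F$, and that is immediate from $tZ$ being integer-valued — which is precisely why the statement is phrased as an immediate corollary.
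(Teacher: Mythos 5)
Your proposal is correct and follows essentially the same route as the paper: apply Theorem~\ref{thm:mainW} with $\theta+tZ$ in place of $\theta$ and observe that the identification of $A_{\theta+tZ}$ with $A_{\theta}$ is $F$-equivariant, so the crossed products coincide. Your explicit cocycle computation showing $\omega_{\theta+tZ}=\omega_{\theta}$ on $\Z^n$ (because $tZ$ is integer-valued) is exactly the content of the paper's remark that this isomorphism is $F$-equivariant, just spelled out in more detail.
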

\begin{proof}
Follows immediately from the preceding theorem and Theorem~\ref{thm:main2}, noting that the isomorphism between $A_{\theta + tZ}$ and $A_{\theta}$ is F-equivariant.	
\end{proof}
\noindent The above corollary shows that under the extra assumption $W^tZW=Z,$ all the fundamental projective modules over $A_{\theta +tZ}\cong A_{\theta}$ become finitely generated, projective modules over the crossed product.

\section{Some applications: Morita equivalence of noncommutative orbifolds}\label{Sec:ntflip}

\subsection{Trace of the extended module}

Let $F$ be a finite group acting on a C*-algebra $A$. Also suppose that $\tau$ is an $F$-invariant trace on $A$. Then we can define a trace $\tau^F$ on $A\rtimes F$ by 

$$  \tau^F (\sum_{g \in F} a_g\delta_g) := \tau(a_e). $$
Let $\mathcal{E}^F$ denote the finitely generated, projective $A\rtimes F$-module, which is obtained from a finitely generated, projective $A$-module $\mathcal{E},$ as in Proposition~{\ref{mainprop}}.

\begin{lemma}\label{lem:trace}
	
$\tau^F ([\mathcal{E}^F]) = \frac{\tau ([\mathcal{E}])}{|F|},$ where $[\mathcal{E}^F]$ and $[\mathcal{E}]$ denote the K-theory classes of $[\mathcal{E}^F]$ and $[\mathcal{E}],$ respectively. 

\end{lemma}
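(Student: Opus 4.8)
The plan is to compute both sides of the identity $\tau^F([\mathcal{E}^F]) = \tau([\mathcal{E}])/|F|$ by picking a concrete idempotent realizing $\mathcal{E}^F$ over $A \rtimes F$ and tracking how the conditional expectation onto $A$ interacts with the group sum. First I would realize $\mathcal{E}$ as a direct summand of the free module $A^m$ via an idempotent $e \in M_m(A)$, so that $\tau([\mathcal{E}]) = (\Tr \otimes \tau)(e)$. The module action $T: F \to \Aut(\mathcal{E})$ lifts, after stabilizing if necessary, to a family of automorphisms of $A^m$ compatible with $\alpha$, and the Green--Julg construction of Proposition~\ref{mainprop} then produces $\mathcal{E}^F$ as a summand of a free $A \rtimes F$-module; the key point is to write down the corresponding idempotent $\tilde{e}$ over $M_k(A \rtimes F)$ explicitly in terms of $e$ and the implementing unitaries $\delta_g$.

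Second, I would apply the canonical trace $\tau^F$ (equivalently $(\Tr \otimes \tau^F)$ on matrices) to $\tilde{e}$. The crucial observation is that $\tau^F$ only sees the coefficient of $\delta_e$, i.e. $\tau^F(\sum_g a_g \delta_g) = \tau(a_e)$; writing $\tilde{e}$ as a sum over $F$ of matrix-coefficient terms and extracting the $\delta_e$-part should leave exactly $\frac{1}{|F|}$ times the matrix whose $(\Tr \otimes \tau)$ recovers $(\Tr\otimes\tau)(e) = \tau([\mathcal{E}])$. The factor $\frac{1}{|F|}$ enters because the projection $A \rtimes F \to A$ (the conditional expectation $E$) composed with the inclusion $A \hookrightarrow A \rtimes F$, $a \mapsto \frac{1}{|F|}\sum_g \alpha_g^{-1}(a)\cdot(\text{averaging})$... more precisely, because the regular representation $A \rtimes F \hookrightarrow M_{|F|}(A)$ rescales traces by $\frac{1}{|F|}$, and $\mathcal{E}^F$ restricted along this representation is $\mathcal{E}$ with the $F$-action forgotten (the last sentence of Proposition~\ref{mainprop}).

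Actually the cleanest route, which I would take instead, uses precisely that functoriality: let $\iota: A \rtimes F \hookrightarrow M_{|F|}(A)$ be the regular representation. By the final clause of Proposition~\ref{mainprop}, the $M_{|F|}(A)$-module induced from $\mathcal{E}^F$ along $\iota$ is $\mathcal{E}^{\oplus}$, a module whose class under the standard Morita identification $\K_0(M_{|F|}(A)) \cong \K_0(A)$ is just $[\mathcal{E}]$ (the $F$-action being forgotten). The canonical trace on $M_{|F|}(A)$ is $\Tr \otimes \tau$, and under the regular representation one has $(\Tr \otimes \tau)\circ \iota = |F|\cdot \tau^F$ on $\K_0(A \rtimes F)$ — this is the standard trace-scaling computation for $\iota$, where $\iota$ sends $a\delta_e$ to a diagonal matrix with entries $\alpha_{g^{-1}}(a)$ summing under $\Tr\otimes\tau$ (using $F$-invariance of $\tau$) to $|F|\tau(a)$, and sends $a\delta_g$ for $g \neq e$ to an off-diagonal matrix of trace zero. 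Combining: $|F|\cdot\tau^F([\mathcal{E}^F]) = (\Tr\otimes\tau)(\iota_*[\mathcal{E}^F]) = (\Tr\otimes\tau)([\mathcal{E}^{\oplus}]) = \tau([\mathcal{E}])$, which is the claim.

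The main obstacle will be making the identification ``$\iota_*[\mathcal{E}^F] = [\mathcal{E}]$ under $\K_0(M_{|F|}(A))\cong\K_0(A)$'' fully rigorous: one must check that restricting the Green--Julg module along the regular representation genuinely yields $|F|$ copies of $\mathcal{E}$ (as an $A$-module, forgetting $F$) and not some twisted version, and that the Morita identification is the naive one. This is essentially bookkeeping inside the construction of Proposition~\ref{mainprop} — identifying $A\rtimes F$-linear maps with $F$-equivariant $A$-linear maps and then with $M_{|F|}(A)$-linear maps — but it needs care with the left/right conventions used there. Once that compatibility is pinned down, the trace-scaling identity $(\Tr\otimes\tau)\circ\iota = |F|\tau^F$ is a direct computation that I would only sketch.
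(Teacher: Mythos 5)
Your ``cleanest route'' is exactly the paper's proof: the paper defines the regular representation $\Psi\colon A\rtimes F\to A\otimes\mathcal{B}(l^2(F))$, invokes the standard fact that it inverts the Green--Julg map so that $[\Psi(p^F)]=[p]$ in $\K_0$, and computes that $\Psi$ scales the trace by $|F|$ using $F$-invariance of $\tau$ (citing Higson--Guentner for the Green--Julg inverse, which is the compatibility you flag as the main thing to pin down). The only small slip is your parenthetical worry that restriction should yield ``$|F|$ copies of $\mathcal{E}$'': under the Morita identification $\K_0(M_{|F|}(A))\cong\K_0(A)$ the class of $\Psi(p^F)$ is a single copy $[\mathcal{E}]$, as your displayed chain of equalities correctly uses.
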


\begin{proof}
	Let $p^F$ denote the projection corresponding to $\mathcal{E}^F,$ and $p$ the projection corresponding to $\mathcal{E}.$ Define the canonical injection (regular representation) $\Psi$ from $A\rtimes F$ to $A \otimes \mathcal{B}\left(l^{2}(F)\right)$ by mapping $a$ to $\sum_{g \in F} g \cdot a \otimes p_{g}$ (where $p_{g}$ is the projection onto the functions supported on $\{g\}$) and by mapping $\delta_g$ to $1 \otimes \rho(g),$ where $\rho$ is the right regular representation. It is well known that the above map defines an inverse to the Green--Julg map in $F$-equivariant K-theory (see e.g. \cite[page 191]{HG04}). If $p^F$ is in $A\rtimes F$, let us write $p^F= \sum_{g \in F} a_g\delta_g.$ Then $\tau^F([p^F])= \tau(a_e).$ On the other hand, $[\Psi (p^F)] = [p]$ in $\K(A \otimes \mathcal{B}\left(l^{2}(F)\right)),$ and hence, $\tau([\Psi (p^F)]) = \tau([p]).$ But $\tau([\Psi (p^F)]) = |F| \tau(a_e),$ using the above formula of $\Psi$ and the fact that $\tau$ is $F$-invariant. Hence $\tau^F([p^F])= \frac{\tau([p])}{|F|}$. A similar computation holds when $p^F$ is in some matrix algebra over $A\rtimes F$.

\end{proof}

\subsection{Images of the canonical traces of noncommutative orbifolds}

Let us come back to the noncommutative orbifolds. As in Example~\ref{ex:orbifold}, take a finite order matrix $W\in \GL(n,\Z)$ such that $W^t\theta W = \theta$. Assume that the order of $W$ is $N.$ We then have $C^*(\Z^n\rtimes F, \omega_\theta ') = A_{\theta} \rtimes_\alpha F,$ $F:=\langle W \rangle.$ 

For $A_{\theta} \rtimes_\alpha F,$ the regular representation $\Psi:A_{\theta} \rtimes_\alpha F \hookrightarrow \mathrm{M}_N(A_{\theta})$ is given by the following:

\begin{equation}
	\Psi(\sum_{i=0}^{N-1} a_i W^i) = \left(\begin{array}{ccccc}a_{0} & a_{1} & a_{2} & \cdots & a_{N-1} \\ \alpha\left(a_{N-1}\right) & \alpha\left(a_{0}\right) & \alpha\left(a_{1}\right) & \cdots & \alpha\left(a_{N-2}\right)\\ \alpha^{2}\left(a_{N-2)}\right. & \alpha^{2}\left(a_{N-1}\right) & \alpha^{2}\left(a_{0}\right) & \cdots & \vdots \\ \vdots & \ddots & \ddots & \ddots & \alpha^{N-2}\left(a_{1}\right) \\ \alpha^{n-1}\left(a_{1}\right) & \alpha^{N-1}\left(a_{2}\right) & \cdots & \alpha^{N-1}\left(a_{N-1}\right) & \alpha^{N-1}\left(a_{0}\right)\end{array}\right).
\end{equation} 
The canonical trace $\Tr$ on $A_{\theta}$ is clearly $F$-invariant. Now the canonical trace on $A_{\theta} \rtimes F$ is given by 

$$\Tr^F(\sum_{i=0}^{N-1} a_i W^i):= \Tr(a_0).$$ If we identify $A_{\theta} \rtimes F$ inside $\mathrm{M}_N(A_{\theta})$ via the map $\Psi,$ the trace $\Tr^F$ is the normalised trace on $\mathrm{M}_N(A_{\theta}).$ This immediately gives 

\begin{equation}\label{eq:trace_orbifold}
	\Tr^F(\K_0(A_{\theta} \rtimes F))\subseteq \frac{1}{N}\Tr(\K_0(A_{\theta})). 
\end{equation}
So from Equation~\ref{eq:image_of_trace_nt} we have 
$$
	\Tr^F(\K_0(A_{\theta} \rtimes F))\subseteq \frac{1}{N}\Bigg( \mathbb{Z}+\sum_{0<|I| \leq n} \pf(M_{I}^{\theta}) \mathbb{Z}\Bigg).
$$
Our main theorem, Theorem~\ref{thm:mainW}, gives sufficient conditions on $W$ so that $\frac{1}{N}\pf(M_{I}^{\theta}) \in \Tr^F(\K_0(A_{\theta} \rtimes F))$ as we have the following theorem.

\begin{theorem}\label{th:main_trace}
 With all the notations introduced before, let $W\in \GL(n,\Z)$ be of finite order such that $W^t\theta W = \theta$ and $F:=\langle W \rangle.$  Suppose $g_{I, \Sigma} F(g_{I, \Sigma})^{-1} \subset \mathcal{R}$ inside $\mathrm{SO}(n, n|\mathbb{Z}).$ Then $\frac{1}{N}\pf(M_{I}^{\theta}) \in \Tr^F(\K_0(A_{\theta} \rtimes F)),$ where $N$ is the order of $W$. 
	\end{theorem}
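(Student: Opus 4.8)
The plan is to combine Theorem~\ref{thm:mainW} with Lemma~\ref{lem:trace} in the most direct way possible. By Theorem~\ref{thm:mainW}, the hypothesis $g_{I,\Sigma}F(g_{I,\Sigma})^{-1}\subset\mathcal{R}$ guarantees that the fundamental projective module $\mathcal{E}_I^\theta$ over $A_\theta$ carries a right $F$-action $T:F\to\Aut(\mathcal{E}_I^\theta)$ satisfying the compatibility $\xi(T_g)a=(\xi\alpha_g(a))T_g$ of Proposition~\ref{mainprop}; indeed the proof of Theorem~\ref{thm:mainW} produces exactly this action (built from the metaplectic operator of $W_1$, cf. Equation~\eqref{eq:meta_gen}) and verifies the inner-product compatibility $\langle fW,gW\rangle_{\mathcal{A}^\infty}=\alpha_{W^{-1}}(\langle f,g\rangle_{\mathcal{A}^\infty})$ needed to extend it to the completion. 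Hence Proposition~\ref{mainprop} applies and yields a finitely generated projective $A_\theta\rtimes F$-module, which I will denote $(\mathcal{E}_I^\theta)^F$, whose restriction to $A_\theta$ is $\mathcal{E}_I^\theta$.

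Next I would invoke Lemma~\ref{lem:trace} with $A=A_\theta$, $\tau=\Tr$ (the canonical trace, which as noted is $F$-invariant), and $\mathcal{E}=\mathcal{E}_I^\theta$. This gives $\Tr^F([(\mathcal{E}_I^\theta)^F])=\frac{1}{|F|}\Tr([\mathcal{E}_I^\theta])$. Now $|F|=N$ since $F=\langle W\rangle$ and $W$ has order $N$, and by the construction of the fundamental projective modules together with Remark~\ref{rmk:trace_of_heisenberg}, $\Tr([\mathcal{E}_I^\theta])=\pf(M_I^\theta)$. Therefore $\Tr^F([(\mathcal{E}_I^\theta)^F])=\frac{1}{N}\pf(M_I^\theta)$. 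Since $[(\mathcal{E}_I^\theta)^F]\in\K_0(A_\theta\rtimes F)$, this exhibits $\frac{1}{N}\pf(M_I^\theta)$ as an element of $\Tr^F(\K_0(A_\theta\rtimes F))$, which is the assertion.

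One small point to address cleanly is that Theorem~\ref{thm:mainW} is stated under the assumption $\pf(M_I^\theta)\neq 0$ (needed so that $\mathcal{E}_I^\theta$ is defined at all), whereas the present statement omits it; I would simply remark that if $\pf(M_I^\theta)=0$ then $\frac{1}{N}\pf(M_I^\theta)=0\in\Tr^F(\K_0(A_\theta\rtimes F))$ trivially, since $0$ is always in the tracial range (it is the trace of the zero module, or of $[0]\in\K_0$), so the interesting case is exactly $\pf(M_I^\theta)\neq 0$ and there Theorem~\ref{thm:mainW} applies verbatim.

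Honestly there is no real obstacle here: all the substantive work — producing the $F$-action on $\mathcal{E}_I^\theta$ and checking the Proposition~\ref{mainprop} hypotheses — is already done inside Theorem~\ref{thm:mainW}, and the trace bookkeeping is Lemma~\ref{lem:trace}. The only thing requiring a sentence of care is making sure the module whose $K_0$-class we feed to $\Tr^F$ is literally the one Proposition~\ref{mainprop} outputs from the action constructed in the proof of Theorem~\ref{thm:mainW}, so that $\Tr([\mathcal{E}])=\pf(M_I^\theta)$ is the correct input to Lemma~\ref{lem:trace}; this is immediate from the fact that Proposition~\ref{mainprop}'s module restricts to the original $A_\theta$-module. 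So the proof is essentially two citations glued together.
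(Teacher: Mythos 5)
Your proposal is correct and is essentially identical to the paper's own proof: the paper likewise applies Theorem~\ref{thm:mainW} to make $\mathcal{E}_I^\theta$ a projective $A_\theta\rtimes F$-module, invokes Lemma~\ref{lem:trace} together with $\Tr([\mathcal{E}_I^\theta])=\pf(M_I^\theta)$, and dismisses the case $\pf(M_I^\theta)=0$ as obvious. Your extra remarks about the module output by Proposition~\ref{mainprop} restricting to the original one are just a more explicit spelling-out of the same two citations.
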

	
	\begin{proof}
		If $\pf(M_{I}^{\theta}) \neq 0,$ using Theorem~\ref{thm:mainW}, $\mathcal{E}_{I}^{\theta}$ becomes a projective $A_{\theta} \rtimes F$-module.  Since the trace of $\mathcal{E}_{I}^{\theta}$ is $\pf(M_{I}^{\theta}),$ use Lemma~\ref{lem:trace}. If $\pf(M_{I}^{\theta}) = 0,$ the statement is obvious.
	\end{proof}

We now discuss various examples. We observe that the trace of the projection $p_0 := \frac{1}{N}(1 + W + W^2 + \cdots +W^{N-1}) \in \K_0(A_{\theta} \rtimes F)$ is $\frac{1}{N}$. Also for an even $n$ with $n=2p$, and $I:=(1, 2, \cdots, 2p),$ $\Sigma$ must be trivial. In this case $g_{I, \Sigma} F(g_{I, \Sigma})^{-1} \subset \mathcal{R},$ since $g_{I, \Sigma} \rho(W^t) (g_{I, \Sigma})^{-1}= \rho(W^{-1}).$  Hence $\frac{1}{N}\pf(M_{I}^{\theta}) = \frac{1}{N}\pf(\theta)\in \Tr^F(\K_0(A_{\theta} \rtimes F)),$ for $|I|=n=2p.$ For an odd $n$, we of course have  $\frac{1}{N}\pf(\theta)=0\in \Tr^F(\K_0(A_{\theta} \rtimes F)).$   

\begin{example}\label{ex:trace_2_dim}(2-dimensional cases)
\normalfont
	Let $\theta$ be a real number. For the 2-dimensional torus $A_\theta,$ we have actions of $F:=\langle W \rangle,$ where $W=W_{(2)}, W_{(3)}, W_{(4)}, W_{(6)}\in \SL(2,\Z),$ on $A_\theta$ as in Section \ref{sec:orbifolds_projective}. From the above observations, we have $\frac{1}{N}$ and $\frac{1}{N}\pf\left(\left( \begin{array}{cc}
0 & \theta \\
-\theta & 0 \\
\end{array} \right) \right) \in \Tr^F(\K_0(A_{\theta} \rtimes F)),$ where $N=2,3,4,6$ for $W=W_{(2)}, W_{(3)}, W_{(4)}, W_{(6)},$ respectively. But $\pf\left(\left( \begin{array}{cc}
0 & \theta \\
-\theta & 0 \\
\end{array} \right) \right)  = \theta.$ Hence

	\begin{equation}\label{eq:trace_2dim} \Tr^F(\K_0(A_{\theta} \rtimes F))= \frac{1}{N}( \mathbb{Z}+\theta \mathbb{Z})=\frac{1}{N}\Tr(\K_0(A_{\theta})).\end{equation}
	In these 2-dimensional cases, the above ranges $\Tr^F(\K_0(A_{\theta} \rtimes F))$ have  already been computed in \cite{ELPW10}.
	
\end{example}

\begin{example}\label{ex:diagonal} (diagonal actions on 4-dimensional tori)
\normalfont
	First take $\theta_1 \in \mathcal{T}_{n_1}(\R)$ and $\theta_2 \in \mathcal{T}_{n_2}(\R)$. Let $W_1, W_2$ be $\theta_1$-symplectic and $\theta_2$-symplectic matrices of order $N_1$ and $N_2,$ respectively.
	Then clearly $W:= \left( \begin{array}{cc}
W_1 & 0 \\
0 & W_2 \\
\end{array} \right) 
$ is a $\theta:= \left( \begin{array}{cc}
\theta_1 & 0 \\
0 & \theta_2 \\
\end{array} \right) 
$-symplectic matrix of order of order $N:=lcm(N_1, N_2).$ Hence $F:=\langle W \rangle$ acts on $A_\theta.$ Clearly $\pf (\theta), \pf(\theta_1), \pf(\theta_2)$ belong to $\Tr(\K_0(A_{\theta})).$ Now assume that these three terms are non-zero. So $n_1$ and $n_2$ must be even. Then $\frac{\pf (\theta)}{N}$ is in $\Tr^F(\K_0(A_{\theta} \rtimes F)),$ from the previous observation.  For, $I=(1,2,\cdots, n_1)$ and $I=(n_1+1, n_1+1, \cdots, n_1+n_2)$, one can choose $R_{I}^{\Sigma} = \id_{n_1+n_2}$ and $ \left( \begin{array}{cc}
0 & \id_{n_1} \\
\id_{n_2} & 0 \\
\end{array} \right) 
$, respectively. Then one easily checks that in both cases, $g_{I, \Sigma} F(g_{I, \Sigma})^{-1} \subset \mathcal{R}.$ Hence $\frac{\pf (\theta_1)}{N}$, $\frac{\pf (\theta_2)}{N}$ are in $\Tr^F(\K_0(A_{\theta} \rtimes F)).$

 Let us specialise this example to $n_1=n_2=2.$ The $4 \times 4$ matrix $\theta$ is then given by $$
 \theta = \left( \begin{array}{cccc}
0 & \theta_{12} & 0 & 0\\
-\theta_{12} & 0 & 0 & 0\\
0 & 0 & 0 & \theta_{34} \\
0 & 0 & -\theta_{34} & 0 \\
\end{array} \right) .  
$$
In this case $\Tr(\K_0(A_{\theta}))=\mathbb{Z}+\sum_{0<|I| \leq 4} \pf\left(M_{I}^{\theta}\right) \mathbb{Z} = \mathbb{Z}+ \pf\left(M_{(1,2)}^{\theta}\right) \mathbb{Z}+ \pf\left(M_{(3,4)}^{\theta}\right) \mathbb{Z}+ \pf\left(M_{(1,2, 3, 4)}^{\theta}\right) \mathbb{Z} = \mathbb{Z}+ \theta_{12} \mathbb{Z}+ \theta_{34} \mathbb{Z}+ \theta_{12}\theta_{34} \mathbb{Z}.$ Let us also take $W_1, W_2 \in \SL(2,\Z)$ of finite order (say $N_1$ and $N_2,$ respectively) as in the previous example. Then $F:=\langle W \rangle$ acts on $A_\theta,$ where $W:= \left( \begin{array}{cc}
W_1 & 0 \\
0 & W_2 \\
\end{array} \right). 
$ Using the above, 
 
$$\Tr^F(\K_0(A_{\theta} \rtimes F))= \frac{1}{N}\Tr(\K_0(A_{\theta})),$$ for $N:=lcm(N_1, N_2).$  
One may look at \cite{He19} for more examples of a similar kind, where one can compute the ranges of the traces explicitly just like the above.

\end{example}

\begin{example}\label{ex:trace_flip}(flip actions on n-dimensional noncommutative tori)
\normalfont
	 Let us consider the flip action ($W= -\id_n$) of $\Z_2$ on an $n$-dimensional noncommutative torus $A_\theta$. In this case, $g_{I, \Sigma} \rho(W^t) (g_{I, \Sigma})^{-1}= \rho(W^{t}).$ Hence $g_{I, \Sigma} F(g_{I, \Sigma})^{-1} \subset \mathcal{R}$ trivially,  for every $I$ and $\Sigma$.
	Hence    
	\begin{equation}\label{eq:trace_flip}
		\Tr^{\Z_2}(\K_0(A_{\theta} \rtimes \Z_2))= \frac{1}{2}\Bigg( \mathbb{Z}+\sum_{0<|I| \leq n} \pf\left(M_{I}^{\theta}\right) \mathbb{Z}\Bigg)=\frac{1}{2}\Tr(\K_0(A_{\theta})). 
\end{equation}
	
\end{example}
Note that Example~\ref{ex:trace_2_dim} and Example~\ref{ex:trace_flip} also satisfy the conditions of Corollary \ref{cor:mainW}.

\subsection{Morita equivalence of noncommutative tori and orbifolds}\label{Sec:trace_orbi}

To obtain results about classification, we will restrict ourselves to simple C*-algebras. We start with the following definition.

\begin{definition}\label{def:nondegenerate}
\normalfont
	A skew symmetric real $n \times n$ matrix $\theta$ is called \emph{non-degenerate} if whenever $x \in \mathbb{Z}^{n}$ satisfies $e(\langle x, \theta y\rangle)=1$ for all $y \in \Z^n,$ then $x=0 .$ 
\end{definition}

 Let us denote the canonical trace of $A_\theta$ by $\Tr_{\theta}.$ We want to prove the following theorem.

\begin{theorem}\label{thm:main_Morita}
 	Let $\theta_{1}$ and $\theta_{2}$ be non-degenerate inside $\mathcal{T}_n$. Let $W\in \GL(n,\Z)$ be of finite order such that $W^t\theta_1 W = \theta_1$ and $W^t\theta_2 W = \theta_2$. Also assume that the action of $F:=\langle W \rangle$ on $\Z^n$ is free outside the origin $0 \in \Z^{n} .$ Then $A_{\theta_{1}}\rtimes F$ is strongly Morita equivalent to $A_{\theta_{2}}\rtimes F$ if and only if there exists a $\lambda>0$ such that $\Tr^F_{\theta_{1}}$ and $\lambda \Tr^F_{\theta_{2}}$ have the same range.
 	 \end{theorem}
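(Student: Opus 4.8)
The plan is to establish both directions by combining the classification machinery for simple C*-algebras of these orbifolds with an explicit trace-pairing computation. First, I would recall why, under the hypotheses, $A_{\theta_i}\rtimes F$ is a simple C*-algebra that falls within the scope of Elliott's classification program: non-degeneracy of $\theta_i$ makes $A_{\theta_i}$ simple, and freeness of the $F$-action on $\Z^n$ outside the origin ensures the crossed product is simple (this is the kind of hypothesis used in \cite{JL15}, \cite{He19}); moreover these crossed products are known to be AH of real rank zero, hence classified by the Elliott invariant $(\K_0, \K_0^+, [1], \K_1, T, \rho)$. Since $F$ is finite and the algebras are stably finite with unique trace on each tracial simplex (the canonical trace $\Tr^F_{\theta_i}$ is the unique tracial state), strong Morita equivalence is equivalent to a scaled isomorphism of ordered $\K_0$-groups together with matching $\K_1$, with the order determined by the pairing of $\K_0$ with the trace.

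For the forward direction, suppose $A_{\theta_1}\rtimes F$ and $A_{\theta_2}\rtimes F$ are strongly Morita equivalent. Then there is an isomorphism of the ordered $\K_0$-groups (up to rescaling the trace), so in particular the image $\Tr^F_{\theta_1}(\K_0(A_{\theta_1}\rtimes F))$ and $\lambda\,\Tr^F_{\theta_2}(\K_0(A_{\theta_2}\rtimes F))$ must coincide for the $\lambda>0$ coming from the Morita bimodule (the positive scalar by which traces transform under Morita equivalence). Here one uses that for these algebras the trace is injective on $\K_0$ modulo torsion, or more precisely that the order structure on $\K_0$ is completely encoded by the trace (strict ordering from the trace, since real rank zero plus simplicity gives $\K_0$ with the trace-determined order), so the range of the trace is a Morita invariant up to scaling. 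This direction is essentially formal once the classification input is in place.

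For the converse, suppose $\Tr^F_{\theta_1}$ and $\lambda\,\Tr^F_{\theta_2}$ have the same range for some $\lambda>0$. The strategy is to build an isomorphism of Elliott invariants. The $\K_1$-groups and the torsion-free ranks of $\K_0$ are the same because they are computed by the formulas of \cite{LL12} (the ranks depend only on $n$ and $|F|$ and the structure of the $F$-action, which is identical for $\theta_1$ and $\theta_2$); the torsion subgroup of $\K_0$ likewise agrees by the same computation. One then uses the equality of trace ranges to produce a compatible order isomorphism $\K_0(A_{\theta_1}\rtimes F)\to \K_0(A_{\theta_2}\rtimes F)$ carrying $[1]$ to $[1]$ and intertwining $\Tr^F_{\theta_1}$ with $\lambda\,\Tr^F_{\theta_2}$: since the order on each $\K_0$ is the one induced by its (unique) trace, any group isomorphism realizing the equality of trace ranges and fixing the class of the unit is automatically an order isomorphism. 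Feeding this data into the classification theorem yields the Morita equivalence.

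\textbf{Main obstacle.} The crux is the converse direction, and within it the construction of the $\K_0$-isomorphism intertwining the traces. The subtlety is that knowing the \emph{ranges} $\Tr^F_{\theta_i}(\K_0)$ agree (as subgroups of $\R$) does not by itself produce a group isomorphism $\K_0(A_{\theta_1}\rtimes F)\to\K_0(A_{\theta_2}\rtimes F)$ respecting the full invariant — one must match the kernels of the trace maps (the ``infinitesimal'' part, governed by torsion and by the rank of the part of $\K_0$ not detected by the trace) and check that the position of $[1]$ is consistent. This requires knowing that the trace map $\K_0(A_{\theta_i}\rtimes F)\to\R$ has the \emph{same} structure for $i=1,2$: same kernel as an abstract group and same placement of $[1]$ relative to the range. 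I expect this to follow from the explicit K-theory computations (the Chern character / the Baum–Connes-type description making $\K_*$ of the orbifold depend only on combinatorial data of $(\Z^n,F)$, not on $\theta$), but verifying that the trace pairing is rigid in this way — that it genuinely depends only on the range — is the step that needs the most care. A clean way to package it is: for simple AH algebras of real rank zero with unique trace, the pair $(\K_0,[1])$ together with the trace map is determined up to isomorphism by the abstract group, the distinguished element, and the range of the trace; apply this with the fact that the abstract groups and unit classes coincide for $\theta_1$ and $\theta_2$.
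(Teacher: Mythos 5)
Your overall strategy (classification of the orbifolds as real--rank--zero algebras plus a trace--range comparison) is the same as the paper's, and your forward direction is essentially right, though the paper gets it more cheaply: it induces a tracial functional on $A_{\theta_2}\rtimes F$ from an imprimitivity bimodule, invokes Rieffel's result that the induced trace has the same range as $\Tr^F_{\theta_1}$, and uses uniqueness of the tracial state to identify it with $\lambda\Tr^F_{\theta_2}$ --- no classification needed there. The real problem is in your converse, specifically in the step you yourself flag as the crux. You propose to build an order isomorphism $g:\K_0(A_{\theta_1}\rtimes F)\to\K_0(A_{\theta_2}\rtimes F)$ \emph{carrying $[1]$ to $[1]$} and intertwining $\Tr^F_{\theta_1}$ with $\lambda\Tr^F_{\theta_2}$. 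These two requirements are incompatible whenever $\lambda\neq 1$: evaluating the intertwining relation at $[1]$ gives $\lambda\,\Tr^F_{\theta_2}([1])=\Tr^F_{\theta_1}([1])$, i.e.\ $\lambda=1$. So your ``clean packaging'' at the end cannot be carried out as stated, and since the theorem only claims Morita equivalence (not isomorphism), insisting on matching units is also the wrong target. The paper's resolution is the standard full--corner trick: it constructs $g$ with $\lambda\Tr^F_{\theta_2}\circ g=\Tr^F_{\theta_1}$ without any condition on $g([1])$, notes that $g$ is automatically an order isomorphism (order determined by the unique trace, via \cite[Proposition 3.7]{BCHL18}), realizes the positive class $g([1])$ by a projection $p\in \mathrm{M}_q(A_{\theta_2}\rtimes F)$ which is full by simplicity, and then applies classification to conclude $A_{\theta_1}\rtimes F\cong p\,\mathrm{M}_q(A_{\theta_2}\rtimes F)\,p$, which is Morita equivalent to $A_{\theta_2}\rtimes F$. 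That renormalization through the corner is exactly what absorbs the factor $\lambda$, and it is missing from your argument.

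A second, smaller gap: you leave the construction of $g$ itself at the level of ``I expect this to follow from the explicit K-theory computations.'' The paper makes this precise by observing that, under the freeness hypothesis, $\K_0$ of both crossed products is finitely generated and torsion free (Theorem~\ref{thm:LL12} via $\K_*(A_\theta\rtimes F)\cong\K_*(C^*(\Z^n\rtimes F))$), so the short exact sequences $0\to\ker(\Tr^F_{\theta_i})\to\K_0\to R_i\to 0$ split; the kernels are then free of equal rank (the total ranks agree and $R_1=\lambda R_2$ forces the ranges to have equal rank), hence isomorphic, and $g$ is taken to be $\psi\oplus\phi$ with $\phi$ multiplication by $1/\lambda$. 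This is the point where the hypothesis that $F$ acts freely outside the origin is actually used, and your write-up should make that dependence explicit.
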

 	 
 	 It is clear that the actions in Example~\ref{ex:trace_2_dim}, and Example~\ref{ex:trace_flip} and the 4-dimensional example in Example~\ref{ex:diagonal} are free outside the origin $0 \in \Z^{n} .$  Also in \cite{JL15} and \cite{He19}, various examples of $W$ are constructed which have the same property. 
 	 
 	 The proof of Theorem~\ref{thm:main_Morita} needs some preparation. Let us first recall the following proposition.
 	  \begin{proposition}\label{prop:phi06}(\cite[Proposition 3.7]{Phi06})
	 Let $A$ be a simple infinite dimensional separable unital nuclear C*-algebra with tracial rank zero and which satisfies the Universal Coefficient Theorem. Then $A$ is a simple $A H$ algebra with real rank zero and no dimension growth. If $\K_{*}(A)$ is torsion free, $A$ is an AT algebra. If, in addition, $\K_{1}(A)=0$, then $A$ is an $A F$ algebra.
\end{proposition}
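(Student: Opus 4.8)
The plan is to deduce this structure theorem from Lin's classification of C*-algebras with tracial rank zero, together with the range-of-invariant results for AH algebras of real rank zero. The central input is Lin's theorem: two unital separable simple nuclear C*-algebras with tracial rank zero satisfying the UCT are isomorphic if and only if their Elliott invariants (the ordered group $(\K_0, \K_0^+, [1])$ together with $\K_1$, the trace simplex, and the pairing between traces and $\K_0$) are isomorphic. The strategy is therefore to produce, for the given $A$, an AH model $B$ of the asserted type with $\mathrm{Ell}(B) \cong \mathrm{Ell}(A)$, and then to conclude $A \cong B$ by the uniqueness half of Lin's theorem.

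First I would record the abstract consequences of $\mathrm{TR}(A)=0$ for a simple unital $A$: it has real rank zero and stable rank one, and $\K_0(A)$ is weakly unperforated with the Riesz interpolation property, so that $(\K_0(A), \K_0(A)^+, [1_A])$ is of exactly the form occurring as the ordered $\K_0$-group of a simple real-rank-zero AH algebra. Together with the countable abelian group $\K_1(A)$ and the trace data, the Elliott invariant of $A$ then lies in the range covered by the range-of-invariant constructions for simple unital AH algebras with real rank zero and no dimension growth (Elliott--Gong and subsequent work). Invoking that range theorem produces such a $B$ with $\mathrm{Ell}(B) \cong \mathrm{Ell}(A)$. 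Since real-rank-zero AH algebras with no dimension growth themselves have tracial rank zero and satisfy the UCT, Lin's uniqueness theorem applies and gives $A \cong B$; this is the first assertion.

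For the refinements I would arrange the model $B$ to have the special form dictated by the $\K$-theory. If $\K_*(A)$ is torsion free, then $\K_0(A)$ and $\K_1(A)$ are torsion-free countable abelian groups, and the range results permit the AH model to be built from circle algebras $M_k(C(\T))$, i.e.\ $B$ may be taken to be a simple AT algebra of real rank zero realizing $\mathrm{Ell}(A)$; classification then forces $A$ to be AT. If in addition $\K_1(A)=0$, the invariant collapses to the ordered group $(\K_0(A), \K_0(A)^+, [1_A])$, which is now a torsion-free dimension group (weakly unperforated and torsion free, hence unperforated, with Riesz interpolation). By the Effros--Handelman--Shen realization and Elliott's classification of AF algebras this invariant is attained by an AF algebra $B$, and one further application of the classification theorem yields that $A$ is AF.

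The main obstacle is Lin's classification theorem itself, whose existence and uniqueness halves rest on delicate $\KK$-theoretic uniqueness and approximate-intertwining arguments and crucially use the UCT to realize prescribed $\KK$-elements; this is where essentially all of the depth lies. By comparison, verifying that $\mathrm{TR}(A)=0$ forces the listed regularity of the invariant, and the range-of-invariant constructions that supply the AH, AT, and AF models, are more constructive and can be assembled from the existing literature.
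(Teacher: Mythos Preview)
The paper does not supply its own proof of this proposition: it is quoted verbatim from \cite[Proposition 3.7]{Phi06} and used as a black box. Your proposal is a correct outline of the argument in the original source---Lin's classification of simple separable unital nuclear tracial-rank-zero C*-algebras satisfying the UCT, combined with the range-of-invariant constructions for AH (respectively AT, AF) algebras of real rank zero---so there is nothing to compare against in the present paper.
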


Let $\theta \in \mathcal{T}_{n}$ be non-degenerate. Then the following are known.

\begin{itemize}
	\item $A_{\theta}$ is a simple C*-algebra (even the converse is true: simplicity of $A_\theta$ implies $\theta$ must be non-degenerate) with a unique tracial state (\cite[Theorem 1.9]{Phi06});
	\item $A_{\theta}$ has tracial rank zero (\cite[Theorem 3.6]{Phi06});
	\item If $\beta$ is an action of a finite group on $A_{\theta}$  which has the tracial Rokhlin property (see \cite[Section 5]{ELPW10}), $A_{\theta} \rtimes_{\beta} F$ is a simple C*-algebra with tracial rank zero (\cite[Corollary 1.6, Theorem 2.6]{Phi11}). Also, $A_{\theta} \rtimes_{\beta} F$ has a unique tracial state (\cite[Proposition 5.7]{ELPW10});
	\item Let $W\in \GL(n,\Z)$ be of finite order such that $W^t\theta W = \theta$. The the action $\alpha$ of $F:=\langle W \rangle$ on $A_\theta$ has the tracial Rokhlin property (\cite[Lemma 5.10 and Theorem 5.5]{ELPW10});
	\item For the action $\alpha,$ $A_{\theta} \rtimes_{\alpha} F$ satisfies the Universal Coefficient Theorem (\cite[Proposition 3.1]{JL15});
	\item $\K_*(A_{\theta} \rtimes_{\alpha} F) \cong \K_*(C^*(\Z^n\rtimes F, \omega_\theta ')) \cong \K_*(C^*(\Z^n\rtimes F))$ (\cite[Theorem 0.3]{ELPW10}).
\end{itemize}
For the K-groups of $C^*(\Z^n\rtimes F),$ the following result is known.

 	\begin{theorem} (\cite[Theorem 0.1]{LL12})\label{thm:LL12} Let $n, m \in \mathbb{N .}$ Consider the extension of groups $$1 \rightarrow \Z^n \rightarrow \Z^n \rtimes \Z_m \rightarrow \Z_m \rightarrow 1$$ such that the action of
$\Z_m$ on $\Z^n$ is free outside the origin $0 \in \Z^{n} .$ Then $\K_{0}\left(C^{*}\left(\Z^n \rtimes \Z_m\right)\right) \cong \mathbb{Z}^{s_{0}}$ for some
$s_{0} \in \mathbb{Z}$ and $\K_{1}\left(C^{*}\left(\Z^n \rtimes \Z_m\right)\right) \cong \mathbb{Z}^{s_{1}}$ for $s_{1} \in \mathbb{Z}.$ If $m$ is even, $s_{1}=0 .$

\end{theorem}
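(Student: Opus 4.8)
The plan is to pass from the group C*-algebra to an equivariant K-theory computation and then to separate the rational rank count from the integral torsion-freeness. First I would apply the Fourier (Gelfand) transform $C^*(\Z^n)\cong C(\T^n)$, under which the semidirect-product presentation gives a canonical isomorphism $C^*(\Z^n\rtimes\Z_m)\cong C(\T^n)\rtimes_\sigma\Z_m$, where $\sigma$ is the dual of the linear $\Z_m$-action on $\Z^n$. Since $\Z_m$ is finite, the Green--Julg theorem then yields $\K_i(C^*(\Z^n\rtimes\Z_m))\cong\K^{-i}_{\Z_m}(\T^n)$, the equivariant topological K-theory of the dual torus. Thus the whole statement reduces to a computation of $\K^*_{\Z_m}(\T^n)$ for this linear action.

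Next I would reinterpret the freeness hypothesis spectrally. Let $W$ generate $\Z_m$. The action is free outside $0$ exactly when $\det(W^j-\id_n)\ne 0$ for $1\le j\le m-1$, i.e. no power $W^j$ with $j\ne 0$ fixes a nonzero lattice vector. Over $\C$ this says $1$ is not an eigenvalue of any such $W^j$; combined with $W^m=\id_n$, every eigenvalue of $W$ is a \emph{primitive} $m$-th root of unity. Dually, each nontrivial element of $\Z_m$ has only finitely many isolated fixed points on $\T^n$, so the singular set $S\subset\T^n$ is finite.

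For the ranks I would invoke the equivariant Chern character (Atiyah--Segal localization / Lück), which for the abelian group $\Z_m$ gives, after tensoring with $\C$, a splitting indexed by $g\in\Z_m$ whose $g$-summand is governed by $\K^*((\T^n)^g)$. For $g\ne e$ the fixed set $(\T^n)^g$ is finite, contributing only in degree $0$; for $g=e$ the contribution is $(H^{\mathrm{odd/even}}(\T^n;\C))^{\Z_m}=(\Lambda^{\mathrm{odd/even}}\C^n)^{\Z_m}$. This pins down $s_0,s_1$ as ranks and gives the parity statement: when $m$ is even, each eigenvalue $\zeta^{a}$ of $W$ has $\gcd(a,m)=1$, hence $a$ odd, so a wedge $e_{i_1}\wedge\cdots\wedge e_{i_k}$ is $\Z_m$-invariant only if $a_{i_1}+\cdots+a_{i_k}\equiv 0\pmod m$, which is impossible for odd $k$ (a sum of $k$ odd integers is odd, while $m$ is even). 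Hence $(\Lambda^{\mathrm{odd}}\C^n)^{\Z_m}=0$ and $s_1=0$ \emph{rationally} when $m$ is even.

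The remaining, and genuinely hard, step is the \emph{integral} conclusion: that $\K_*$ is torsion-free, so $\K_0\cong\Z^{s_0}$ and $\K_1\cong\Z^{s_1}$, and therefore $\K_1=0$ when $m$ is even. For this I would run the equivariant Atiyah--Hirzebruch spectral sequence converging to $\K^*_{\Z_m}(\T^n)$, whose $E_2$-page is the Bredon cohomology $H^*_{\Z_m}(\T^n;\underline{R})$ with coefficients in the representation-ring system $R(-)$, which is free abelian on each finite subgroup. Using a $\Z_m$-CW structure adapted to the finite singular set $S$, together with the integral (co)homology of $\Z^n\rtimes\Z_m$, I would show the $E_2$-page is torsion-free and concentrated so that all differentials vanish and no extension problems arise. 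The main obstacle is precisely this collapse-and-no-torsion argument: the Chern character controls only ranks, so upgrading ``$s_1=0$ rationally'' to ``$\K_1=0$ integrally'' forces one to exclude torsion, and this rests on the explicit Bredon-cohomology computation rather than on any formal degeneration.
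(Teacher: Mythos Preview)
The paper does not prove this theorem at all: it is quoted verbatim from \cite{LL12} (Langer--L\"uck) and used as a black box to conclude that the K-groups of the orbifolds are torsion free. So there is no ``paper's own proof'' to compare against; any comparison has to be with the cited source.

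Your outline is broadly in the spirit of what Langer and L\"uck actually do. They also reduce to an equivariant computation (via Baum--Connes for $\Z^n\rtimes\Z_m$, which amounts to computing $\K^{\Z_m}_*(\underline{E}(\Z^n\rtimes\Z_m))$ with $\underline{E}=\R^n$, equivalently $\K^*_{\Z_m}(\T^n)$ by Green--Julg and Poincar\'e duality), and the rational rank count you sketch via the equivariant Chern character, together with the parity observation that for even $m$ all eigenvalue exponents are odd so $(\Lambda^{\mathrm{odd}}\C^n)^{\Z_m}=0$, is correct and is essentially the easy half of their argument.

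Where your proposal has a genuine gap is exactly where you flag it: the integral torsion-freeness. You write that you ``would show the $E_2$-page is torsion-free and concentrated so that all differentials vanish and no extension problems arise,'' but you do not indicate how. This is not a routine step: it is the main content of \cite{LL12}, occupying the bulk of that paper. Langer and L\"uck carry out an explicit, case-by-case analysis of the Bredon (co)homology of $\T^n$ (equivalently of $\underline{E}(\Z^n\rtimes\Z_m)$) with representation-ring coefficients, using an explicit $\Z_m$-CW decomposition tied to the arithmetic of the finite fixed-point set and an inductive splitting over the prime factors of $m$; collapse of the spectral sequence and absence of torsion are deduced from these computations, not from any general principle. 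A generic equivariant Atiyah--Hirzebruch argument does not give this: for arbitrary finite-group actions on tori the equivariant K-theory can very well have torsion, and the ``free outside the origin'' hypothesis enters precisely through those explicit calculations. So your plan is correct as a roadmap, but as it stands it asserts the hard theorem rather than proving it; to turn it into a proof you would essentially have to reproduce the detailed Bredon computations of \cite{LL12}.
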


We are now ready to prove Theorem~\ref{thm:main_Morita}.

\begin{proof}[Proof of Theorem~\ref{thm:main_Morita}]

The freeness condition and the fact that $\K_*(A_{\theta} \rtimes F) \cong \K_*(C^*(\Z^n\rtimes F))$ show that the K-groups are torsion free, using Theorem~\ref{thm:LL12}. Then the above list of results along with Proposition~\ref{prop:phi06} shows that $A_{\theta_{1}}\rtimes F$ and $A_{\theta_{2}}\rtimes F$ are AT algebras.

	Assume there is a $\lambda>0$ such that $\Tr^F_{\theta_{1}}$ and $\lambda \Tr^F_{\theta_{2}}$ have the same range. Now it is enough to find an isomorphism $g: \K_{0}\left(A_{\theta_{1}}\rtimes F\right) \rightarrow \K_{0}\left(A_{\theta_{2}}\rtimes F\right)$ such that $\lambda\Tr^F_{\theta_{2}} \circ g=\Tr^F_{\theta_{1}}.$ Indeed, $g$ is then an order isomorphism by \cite[Proposition 3.7]{BCHL18}, and $g([1])\in \K_{0}\left(A_{\theta_{2}}\rtimes F\right)^+.$ So there is a $q \in \mathbb{N}$ and a projection $p \in \mathrm{M}_{q}\left(A_{\theta_{2}}\rtimes F\right)$ such that $[p]=g([1]) .$ Since $A_{\theta_{2}}\rtimes F$ is simple, $p$ is full.  Then $A_{\theta_{1}}\rtimes F$ and $p \mathrm{M}_{q}\left(A_{\theta_{2}}\rtimes F\right) p$ have isomorphic Elliott invariants of AT algebras, so $A_{\theta_{1}}\rtimes F \cong$ $p \mathrm{M}_{q}\left(A_{\theta_{2}}\rtimes F\right) p$ by classification (\cite{Ell93}, \cite[Theorem 5.2]{Lin04}). Clearly the right hand side algebra is Morita equivalent to $A_{\theta_{2}}\rtimes F.$
	
	Let us now see the existence of the isomorphism $g$. Denote the ranges of $\Tr^F_{\theta_{1}}$ and $\Tr^F_{\theta_{2}}$ by $R_1$ and $R_2,$ respectively. Since $R_1$ and  $R_2$ are finitely generated subgroups of $\R,$ they are free. Also $R_1 = \lambda R_2$ implies that they have the same rank. Now we have the following exact sequences :

\begin{center}

	\begin{tikzcd}
  0 \arrow[r] & \operatorname{ker}\left(\Tr^F_{\theta_{1}}\right) \arrow{r} & \K_{0}(A_{\theta_{1}}\rtimes F)  \arrow[r, "\Tr^F_{\theta_{1}}"]& R_1           \arrow{r} & 0 
    \end{tikzcd}
    
    	\end{center}
    	\begin{center}

	\begin{tikzcd}
  0 \arrow[r] & \operatorname{ker}\left(\Tr^F_{\theta_{2}}\right) \arrow{r} & \K_{0}(A_{\theta_{2}}\rtimes F)  \arrow[r, "\Tr^F_{\theta_{2}}"']& R_2           \arrow{r} & 0 
    \end{tikzcd}
    
    	\end{center}
Note that the above sequences split since the K-groups are torsion free. Now $\operatorname{ker}\left(\Tr^F_{\theta_{1}}\right)$ and $\operatorname{ker}\left(\Tr^F_{\theta_{2}}\right)$ are finitely generated abelian groups of the same rank. So there exists   an isomorphism $\psi$ between them. Now $g$ is defined as $\psi \oplus \phi,$ where $\phi$ is the map between $R_1$ and $R_2$ given by multiplication with $\frac{1}{\lambda}.$ Clearly $\lambda\Tr^F_{\theta_{2}} \circ g=\Tr^F_{\theta_{1}},$ since the following diagram commutes. 

	\begin{center}
\begin{tikzcd}
\K_{0}(A_{\theta_{1}}\rtimes F) \arrow[r, "\Tr^F_{\theta_{1}}"] \arrow[d, "g"]
& R_1 \arrow[d, "\phi"] \\
\K_{0}(A_{\theta_{2}}\rtimes F) \arrow[r, "\Tr^F_{\theta_{2}}"' ]
& R_2
\end{tikzcd}
	\end{center}

For the \emph{only if} part, assume $\mathcal{A}:=A_{\theta_{1}}\rtimes F$ is strongly Morita equivalent to $\mathcal{B}:= A_{\theta_{2}}\rtimes F.$  Let $X$ be an $\mathcal{A}-\mathcal{B}$-imprimitivity bimodule. Define a positive tracial functional $\tau_{X}$ on $\mathcal{B}$ by
$$
\tau_{X}\left(\langle x, y\rangle_{\mathcal{B}}\right):=\Tr^F_{\theta_{1}}(_{\mathcal{A}}\langle y, x\rangle), \quad x, y \in X.
$$
By \cite[Corollary 2.6]{Rie81}, $\Tr^F_{\theta_{1}}$ and $\tau_{X}$ have the same range.  Since $\mathcal{B}$ has a unique trace $\Tr^F_{\theta_{2}}$, $\tau_{X}$ must be a scalar multiple of that trace. 

\end{proof}

Now if $\Tr^F_{\theta_{1}}$ and $\Tr_{\theta_{1}}$ have the same range upto a factor $\frac{1}{|F|}$, and if the same holds for $\Tr^F_{\theta_{2}}$ and $\Tr_{\theta_{2}},$ we have: $\Tr^F_{\theta_{1}}$ and $\lambda \Tr^F_{\theta_{2}}$ have the same range iff $\Tr_{\theta_{1}}$ and $\lambda \Tr_{\theta_{2}}$ have the same range, for some $\lambda>0.$ But the last condition holds iff $A_{\theta_{1}}$ is strongly Morita equivalent to $A_{\theta_{2}}$, using Theorem~\ref{thm:main_Morita}. This observation gives us the following corollaries.

	\begin{corollary}\label{cor:2dim_class}
		Let $\theta_{1}$ and $\theta_{2}$ be irrational numbers and $W$ be one of the matrices $W_{(2)},$ $W_{(3)},$ $W_{(4)},$ $W_{(6)}$ as in Section~\ref{sec:orbifolds_projective} (see also Example~\ref{ex:trace_2_dim}). If $F:=\langle W \rangle,$ then $A_{\theta_{1}}\rtimes F$ is strongly Morita equivalent to $A_{\theta_{2}}\rtimes F$ if and only if $A_{\theta_{1}}$ is strongly Morita equivalent to $A_{\theta_{2}}.$
 	 \end{corollary}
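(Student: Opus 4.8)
The plan is to derive Corollary~\ref{cor:2dim_class} directly from Theorem~\ref{thm:main_Morita} together with the trace computation of Example~\ref{ex:trace_2_dim}. First I would check that the hypotheses of Theorem~\ref{thm:main_Morita} are satisfied in this two-dimensional setting: an irrational $\theta$ is non-degenerate (here $\theta$ is just a real number, and non-degeneracy is the classical statement that $1,\theta$ are rationally independent, i.e. $\theta\notin\Q$); each of the matrices $W_{(2)},W_{(3)},W_{(4)},W_{(6)}$ lies in $\SL(2,\Z)=\Sp(2,\Z,\theta)$ for every $\theta$, so $W^t\theta_i W=\theta_i$ automatically; and each of these matrices generates a finite cyclic group acting on $\Z^2$ freely outside the origin --- this is a finite check, since the only nonzero fixed vectors of a nontrivial element would force eigenvalue $1$, which none of the nontrivial powers of $W_{(2)},W_{(3)},W_{(4)},W_{(6)}$ has.

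Next I would invoke Theorem~\ref{thm:main_Morita}: under these hypotheses, $A_{\theta_1}\rtimes F$ is strongly Morita equivalent to $A_{\theta_2}\rtimes F$ if and only if there is $\lambda>0$ with $\Tr^F_{\theta_1}$ and $\lambda\,\Tr^F_{\theta_2}$ having the same range. Now I would plug in Equation~\eqref{eq:trace_2dim} from Example~\ref{ex:trace_2_dim}, which says $\Tr^F_{\theta_i}(\K_0(A_{\theta_i}\rtimes F))=\tfrac1N(\Z+\theta_i\Z)=\tfrac1N\Tr_{\theta_i}(\K_0(A_{\theta_i}))$ where $N\in\{2,3,4,6\}$ is the common order of $W$ for both $\theta_1$ and $\theta_2$. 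Since the normalising factor $\tfrac1N$ is the same on both sides, the condition ``$\Tr^F_{\theta_1}$ and $\lambda\,\Tr^F_{\theta_2}$ have the same range'' is equivalent to ``$\Z+\theta_1\Z$ and $\lambda(\Z+\theta_2\Z)$ coincide for some $\lambda>0$'', i.e. to ``$\Tr_{\theta_1}(\K_0(A_{\theta_1}))$ and $\lambda\,\Tr_{\theta_2}(\K_0(A_{\theta_2}))$ have the same range for some $\lambda>0$.'' This is precisely the classical criterion (Rieffel, via the classification of irrational rotation algebras) for $A_{\theta_1}$ and $A_{\theta_2}$ to be strongly Morita equivalent; equivalently, one can again cite Theorem~\ref{thm:main_Morita} with $F$ trivial, or simply note $\theta_1\in\Z\theta_2+\Q$ up to the $\GL(2,\Z)$ action, giving the statement.

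I do not expect a genuine obstacle here --- the corollary is essentially a bookkeeping consequence of the theorem plus the already-computed tracial ranges. The only point requiring a moment's care is the observation flagged in the paragraph preceding the corollary: that the normalisation factor relating $\Tr^F_{\theta_i}$ and $\Tr_{\theta_i}$ is the \emph{same} constant $\tfrac1N$ for $i=1,2$, which holds because $W$ (hence its order $N$) is fixed independently of $\theta$. Given that, the equivalence ``$\Tr^F_{\theta_1}\sim\lambda\Tr^F_{\theta_2}$ iff $\Tr_{\theta_1}\sim\lambda\Tr_{\theta_2}$'' is immediate, and combining it with Theorem~\ref{thm:main_Morita} (applied both to the crossed products and, implicitly, to the tori) closes the argument.
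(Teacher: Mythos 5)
Your proposal is correct and follows essentially the same route as the paper: verify non-degeneracy and freeness of the action outside the origin, invoke Theorem~\ref{thm:main_Morita}, and use the computation $\Tr^F_{\theta_i}(\K_0(A_{\theta_i}\rtimes F))=\tfrac{1}{N}\Tr_{\theta_i}(\K_0(A_{\theta_i}))$ from Example~\ref{ex:trace_2_dim} to translate the trace-range condition for the crossed products into the classical one for the tori. The paper's proof is exactly this reduction (the common factor $\tfrac1N$ argument appears in the paragraph preceding the corollary), so there is nothing to add.
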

\begin{proof}
	Since the action of $F$ on $\Z^2$ is free outside the origin $0 \in \Z^{2}$, the result follows from the tracial range computation in Example~\ref{ex:trace_2_dim}, and Theorem~\ref{thm:main_Morita}. 
\end{proof}
\noindent The above corollary is not new, see \cite[Theorem 5.3]{BCHL18}.

\begin{corollary}\label{cor:Z_2_main}
		Let $\theta_{1}, \theta_{2} \in \mathcal{T}_n $ be non-degenerate.
		Let $\Z_2$ act on $A_{\theta_{1}}$ and $A_{\theta_{2}}$ by the flip actions.  Then $A_{\theta_{1}}\rtimes \Z_2$ is strongly Morita equivalent to $A_{\theta_{2}}\rtimes \Z_2$ if and only if $A_{\theta_{1}}$ is strongly Morita equivalent to $A_{\theta_{2}}.$
 	 \end{corollary}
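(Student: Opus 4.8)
The plan is to derive Corollary~\ref{cor:Z_2_main} from Theorem~\ref{thm:main_Morita} together with the tracial range computation for the flip action in Example~\ref{ex:trace_flip}. First I would check that the hypotheses of Theorem~\ref{thm:main_Morita} are satisfied for $W = -\id_n$: the matrix $-\id_n$ has order $2$ (so it is of finite order), it trivially satisfies $W^t\theta_i W = \theta_i$ for any $\theta_i$, and the action of $F = \Z_2 = \langle -\id_n\rangle$ on $\Z^n$ sends $x$ to $-x$, which is free outside $0 \in \Z^n$ (if $-x = x$ then $2x = 0$, so $x = 0$ in $\Z^n$). Hence Theorem~\ref{thm:main_Morita} applies and gives: $A_{\theta_1}\rtimes \Z_2$ is strongly Morita equivalent to $A_{\theta_2}\rtimes \Z_2$ if and only if there is a $\lambda > 0$ with $\Tr^{\Z_2}_{\theta_1}$ and $\lambda\Tr^{\Z_2}_{\theta_2}$ having the same range.

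Next I would invoke Example~\ref{ex:trace_flip}, which gives $\Tr^{\Z_2}(\K_0(A_{\theta_i}\rtimes \Z_2)) = \tfrac12\Tr(\K_0(A_{\theta_i}))$ for $i = 1,2$. Therefore, for any $\lambda > 0$, the ranges of $\Tr^{\Z_2}_{\theta_1}$ and $\lambda\Tr^{\Z_2}_{\theta_2}$ coincide if and only if the ranges of $\Tr_{\theta_1}$ and $\lambda\Tr_{\theta_2}$ coincide (the factor $\tfrac12$ cancels on both sides). This is precisely the observation recorded in the paragraph immediately preceding Corollary~\ref{cor:2dim_class}, so I would phrase the argument by citing that observation: the hypothesis ``$\Tr^F_{\theta_i}$ and $\Tr_{\theta_i}$ have the same range up to a factor $\tfrac{1}{|F|}$'' holds here by Example~\ref{ex:trace_flip} with $|F| = 2$.

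Finally I would appeal to the classical result that for non-degenerate $\theta_1,\theta_2 \in \mathcal{T}_n$, the algebras $A_{\theta_1}$ and $A_{\theta_2}$ are strongly Morita equivalent if and only if there exists $\lambda > 0$ such that $\Tr_{\theta_1}$ and $\lambda\Tr_{\theta_2}$ have the same range (this is the higher-dimensional analogue used implicitly in the discussion after Theorem~\ref{thm:main_Morita}; it follows from Elliott's trace range formula \eqref{eq:image_of_trace_nt}, Li's Morita equivalence theorem, and the classification of simple higher-dimensional noncommutative tori). Combining the three steps closes the equivalence. The main obstacle I anticipate is not conceptual but bookkeeping: making sure the ``same range up to a positive scalar'' condition is exactly the right invariant on both the torus side and the orbifold side, and that the cited Morita-equivalence criterion for non-degenerate higher-dimensional tori is available in the stated generality rather than only for simple $2$-dimensional tori — so I would make sure to cite the precise source for that criterion rather than leave it implicit.
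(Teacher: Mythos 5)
Your proof is correct and follows essentially the same route as the paper: Theorem~\ref{thm:main_Morita} applied to $W=-\id_n$ (whose action on $\Z^n$ is free outside the origin), combined with the computation $\Tr^{\Z_2}(\K_0(A_{\theta_i}\rtimes\Z_2))=\tfrac12\Tr(\K_0(A_{\theta_i}))$ from Example~\ref{ex:trace_flip} and the observation preceding Corollary~\ref{cor:2dim_class}. The only point you flag as needing an external citation --- that non-degenerate $A_{\theta_1}$ and $A_{\theta_2}$ are strongly Morita equivalent iff $\Tr_{\theta_1}$ and $\lambda\Tr_{\theta_2}$ have the same range for some $\lambda>0$ --- is handled in the paper by invoking Theorem~\ref{thm:main_Morita} itself in the degenerate case of trivial $F$, so no additional reference is required.
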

 	 \begin{proof}
	Follows similarly as in Corollary~\ref{cor:2dim_class} from the tracial range computation in Example~\ref{ex:trace_flip}, and noting that the action of $\Z_2$ is free outside the origin $0 \in \Z^{n}.$ 
\end{proof}

One can definitely build more examples for which similar results can be stated. For example, a similar result is true for the 4-dimensional example in Example~\ref{ex:diagonal}. 

From Theorem \ref{thm:li_Morita}, $A_{\theta}$ and $A_{g \theta}$ are Morita equivalent, if $g \theta:=\frac{A \theta+B}{C \theta+D}$ is well defined for $g \in \mathrm{SO}(n, n|\Z).$ Note that if $\theta$ is non-degenerate, then $A_{\theta}$ is simple, and hence $A_{g \theta}$ has to be simple so that $g \theta$ is non-degenerate. In the case of two dimensional tori, we have even a stronger result due to Marc Rieffel. Rieffel (in \cite{Rie81}) showed that $A_{\theta}$ and $A_{\theta^{\prime}}$ are Morita equivalent if and only if $\theta$ and $\theta^{\prime}$ are in the same $\GL(2,\mathbb{Z})$ orbit, that is, $\theta^{\prime}=\frac{a \theta+b}{c \theta+d}$ for some matrix $\left(\begin{array}{cc}a & b \\ c & d\end{array}\right)$ in $\GL(2,\mathbb{Z}) .$

It is also known that for a non-degenerate $\theta,$ the fixed point algebra $A_{\theta}^{F}$ is Morita equivalent to the crossed product algebra $A_{\theta}\rtimes F$ (see the proposition in \cite{Ros79}). Hence as a consequence of Corollary~\ref{cor:Z_2_main} we have the following. 
\begin{corollary}\label{cor:Z_2_main_fixed}
		Let $\theta_{1}, \theta_{2}$ and $\Z_2$ be as in Corollary~\ref{cor:Z_2_main}. Then $A_{\theta_{1}}^{\Z_2}$ is strongly Morita equivalent to $A_{\theta_{2}}^{\Z_2}$ if and only if $A_{\theta_{1}}$ is strongly Morita equivalent to $A_{\theta_{2}}.$
 	 \end{corollary}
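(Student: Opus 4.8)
\emph{Proof proposal.} The plan is to deduce this directly from Corollary~\ref{cor:Z_2_main} by interposing the crossed products $A_{\theta_i}\rtimes\Z_2$ between the fixed-point algebras and the tori. No new estimate or module construction is needed; the statement should be a purely formal consequence of the two results cited just before it.

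First I would invoke the non-degeneracy of $\theta_1$ and $\theta_2$: by the proposition of \cite{Ros79} recalled above, the flip action of $\Z_2$ on each $A_{\theta_i}$ yields a strong Morita equivalence between the fixed-point algebra $A_{\theta_i}^{\Z_2}$ and the crossed product $A_{\theta_i}\rtimes\Z_2$, for $i=1,2$. The only thing to verify here is that the flip action is covered by the hypotheses of that proposition, which it is since $\theta_i$ is assumed non-degenerate (so $A_{\theta_i}$ is simple).

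Next, using that strong Morita equivalence of C*-algebras is an equivalence relation — reflexivity, symmetry by passing to the dual bimodule, and transitivity by the balanced tensor product of imprimitivity bimodules — I would chain these equivalences. This gives that $A_{\theta_1}^{\Z_2}$ is strongly Morita equivalent to $A_{\theta_2}^{\Z_2}$ if and only if $A_{\theta_1}\rtimes\Z_2$ is strongly Morita equivalent to $A_{\theta_2}\rtimes\Z_2$.

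Finally I would apply Corollary~\ref{cor:Z_2_main}, whose hypotheses on $\theta_1$, $\theta_2$ and $\Z_2$ are exactly those assumed here, to conclude that $A_{\theta_1}\rtimes\Z_2$ is strongly Morita equivalent to $A_{\theta_2}\rtimes\Z_2$ if and only if $A_{\theta_1}$ is strongly Morita equivalent to $A_{\theta_2}$. Combining this with the previous step yields the claimed equivalence. I do not anticipate any genuine obstacle; the argument is a short formal chaining of known equivalences, and the only point requiring a moment's care is confirming the applicability of \cite{Ros79} to the flip action, which follows immediately from non-degeneracy.
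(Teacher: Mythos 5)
Your argument is exactly the paper's: the paper derives this corollary by citing Rosenberg's proposition that $A_{\theta}^{F}$ is strongly Morita equivalent to $A_{\theta}\rtimes F$ for non-degenerate $\theta$, and then chaining this with Corollary~\ref{cor:Z_2_main}. The proposal is correct and takes essentially the same approach.
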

 	 \noindent A similar statement is true for the 2-dimensional cases (Example~\ref{ex:trace_2_dim}), which follows from Corollary~\ref{cor:2dim_class}.
 
\section*{Acknowledgements}

This research was  supported by DST, Government of India under the \emph{DST-INSPIRE
Faculty Scheme} with Faculty Reg. No. IFA19-MA139.

%----------------------------------------------------------------------------------------
%	BIBLIOGRAPHY
%----------------------------------------------------------------------------------------

		\begin{bibdiv}
\begin{biblist}

\bib{BCHL18}{article}{
   author={B\"{o}nicke, Christian},
   author={Chakraborty, Sayan},
   author={He, Zhuofeng},
   author={Liao, Hung-Chang},
   title={Isomorphism and Morita equivalence classes for crossed products of
   irrational rotation algebras by cyclic subgroups of $SL_2(\Z)$},
   journal={J. Funct. Anal.},
   volume={275},
   date={2018},
   number={11},
   pages={3208--3243},
   issn={0022-1236},
   review={\MR{3861734}},
   doi={10.1016/j.jfa.2018.08.008},
}
\bib{BEEK91}{article}{
   author={Bratteli, Ola},
   author={Elliott, George A.},
   author={Evans, David E.},
   author={Kishimoto, Akitaka},
   title={Noncommutative spheres. I},
   journal={Internat. J. Math.},
   volume={2},
   date={1991},
   number={2},
   pages={139--166},
   issn={0129-167X},
   review={\MR{1094701}},
   doi={10.1142/S0129167X91000090},
}
\bib{BM18}{article}{
   author={Benameur, Moulay Tahar},
   author={Mathai, Varghese},
   title={Gap-labelling conjecture with nonzero magnetic field},
   journal={Adv. Math.},
   volume={325},
   date={2018},
   pages={116--164},
   issn={0001-8708},
   review={\MR{3742588}},
   doi={10.1016/j.aim.2017.11.030},
}
\bib{BW07}{article}{
   author={Buck, J.},
   author={Walters, S.},
   title={Connes-Chern characters of hexic and cubic modules},
   journal={J. Operator Theory},
   volume={57},
   date={2007},
   number={1},
   pages={35--65},
   issn={0379-4024},
   review={\MR{2301936}},
}

\bib{Cha20}{article}{
   author={Chakraborty, Sayan},
   title={Some remarks on $\K_0$ of noncommutative tori},
   journal={Math. Scand.},
   volume={126},
   date={2020},
   number={2},
   pages={387--400},
   issn={0025-5521},
   review={\MR{4102570}},
   doi={10.7146/math.scand.a-119699},
}
\bib{CL19}{article}{
   author={Chakraborty, Sayan},
   author={Luef, Franz},
   title={Metaplectic transformations and finite group actions on
   noncommutative tori},
   journal={J. Operator Theory},
   volume={82},
   date={2019},
   number={1},
   pages={147--172},
   issn={0379-4024},
   review={\MR{3979942}},
}
\bib{CU21}{article}{
   author={Chakraborty, Sayan},
      author={Ullisch, Ingo},
title={Morita equivalence of noncommutative orbifolds},
   journal={in preparation},
   date={2021},
  }
\bib{dG11}{book}{
   author={de Gosson, Maurice A.},
   title={Symplectic methods in harmonic analysis and in mathematical
   physics},
   series={Pseudo-Differential Operators. Theory and Applications},
   volume={7},
   publisher={Birkh\"{a}user/Springer Basel AG, Basel},
   date={2011},
   pages={xxiv+337},
   isbn={978-3-7643-9991-7},
   review={\MR{2827662}},
   doi={10.1007/978-3-7643-9992-4},
}

\bib{ELPW10}{article}{
   author={Echterhoff, Siegfried},
   author={L\"{u}ck, Wolfgang},
   author={Phillips, N. Christopher},
   author={Walters, Samuel},
   title={The structure of crossed products of irrational rotation algebras
   by finite subgroups of $SL_2(\Z)$},
   journal={J. Reine Angew. Math.},
   volume={639},
   date={2010},
   pages={173--221},
   issn={0075-4102},
   review={\MR{2608195}},
   doi={10.1515/CRELLE.2010.015},
}
\bib{Ell84}{article}{
   author={Elliott, G. A.},
   title={On the K-theory of the C*-algebra generated by a
   projective representation of a torsion-free discrete abelian group},
   conference={
      title={Operator algebras and group representations, Vol. I},
      address={Neptun},
      date={1980},
   },
   book={
      series={Monogr. Stud. Math.},
      volume={17},
      publisher={Pitman, Boston, MA},
   },
   date={1984},
   pages={157--184},
   review={\MR{731772}},
}
\bib{Ell93}{article}{
   author={Elliott, George A.},
   title={On the classification of $C^*$-algebras of real rank zero},
   journal={J. Reine Angew. Math.},
   volume={443},
   date={1993},
   pages={179--219},
   issn={0075-4102},
   review={\MR{1241132}},
   doi={10.1515/crll.1993.443.179},
}

\bib{He19}{article}{
   author={He, Zhuofeng},
   title={Certain actions of finite abelian groups on higher dimensional
   noncommutative tori},
   journal={M\"{u}nster J. Math.},
   volume={12},
   date={2019},
   number={2},
   pages={473--495},
   issn={1867-5778},
   review={\MR{4030923}},
   doi={10.17879/53149722293},
}
\bib{HG04}{article}{
   author={Higson, Nigel},
   author={Guentner, Erik},
   title={Group C*-algebras and K-theory},
   conference={
      title={Noncommutative geometry},
   },
   book={
      series={Lecture Notes in Math.},
      volume={1831},
      publisher={Springer, Berlin},
   },
   date={2004},
   pages={137--251},
      review={\MR{2058474}},
       doi={10.1007/978-3-540-39702-1\textunderscore3},
  }
\bib{JL15}{article}{
   author={Jeong, Ja A.},
   author={Lee, Jae Hyup},
   title={Finite groups acting on higher dimensional noncommutative tori},
   journal={J. Funct. Anal.},
   volume={268},
   date={2015},
   number={2},
   pages={473--499},
   issn={0022-1236},
   review={\MR{3283161}},
   doi={10.1016/j.jfa.2014.10.010},
}
\bib{LL12}{article}{
   author={Langer, Martin},
   author={L\"{u}ck, Wolfgang},
   title={Topological K-theory of the group C*-algebra of a semi-direct
   product $\Z^n\rtimes\Z/m$ for a free conjugation action},
   journal={J. Topol. Anal.},
   volume={4},
   date={2012},
   number={2},
   pages={121--172},
   issn={1793-5253},
   review={\MR{2949238}},
   doi={10.1142/S1793525312500082},
}
\bib{Li04}{article}{
   author={Li, Hanfeng},
   title={Strong Morita equivalence of higher-dimensional noncommutative
   tori},
   journal={J. Reine Angew. Math.},
   volume={576},
   date={2004},
   pages={167--180},
   issn={0075-4102},
   review={\MR{2099203}},
   doi={10.1515/crll.2004.087},
}
\bib{Lin04}{article}{
   author={Lin, Huaxin},
   title={Classification of simple $C^\ast$-algebras of tracial topological
   rank zero},
   journal={Duke Math. J.},
   volume={125},
   date={2004},
   number={1},
   pages={91--119},
   issn={0012-7094},
   review={\MR{2097358}},
   doi={10.1215/S0012-7094-04-12514-X},
}
\bib{Phi06}{article}{
   author={Phillips, N. Christopher},
   title={Every simple higher dimensional noncommutative torus is an AT algebra},
   journal={arXiv:math/0609783},
 date={2006},
  }
  \bib{Phi11}{article}{
   author={Phillips, N. Christopher},
   title={The tracial Rokhlin property for actions of finite groups on
   $C^\ast$-algebras},
   journal={Amer. J. Math.},
   volume={133},
   date={2011},
   number={3},
   pages={581--636},
   issn={0002-9327},
   review={\MR{2808327}},
   doi={10.1353/ajm.2011.0016},
}

\bib{Rie81}{article}{
   author={Rieffel, Marc A.},
   title={$C^{\ast} $-algebras associated with irrational rotations},
   journal={Pacific J. Math.},
   volume={93},
   date={1981},
   number={2},
   pages={415--429},
   issn={0030-8730},
   review={\MR{623572}},
}

\bib{Rie88}{article}{
   author={Rieffel, Marc A.},
   title={Projective modules over higher-dimensional noncommutative tori},
   journal={Canad. J. Math.},
   volume={40},
   date={1988},
   number={2},
   pages={257--338},
   issn={0008-414X},
   review={\MR{941652}},
   doi={10.4153/CJM-1988-012-9},
}
\bib{RS99}{article}{
   author={Rieffel, Marc A.},
   author={Schwarz, Albert},
   title={Morita equivalence of multidimensional noncommutative tori},
   journal={Internat. J. Math.},
   volume={10},
   date={1999},
   number={2},
   pages={289--299},
   issn={0129-167X},
   review={\MR{1687145}},
   doi={10.1142/S0129167X99000100},
}
\bib{Ros79}{article}{
   author={Rosenberg, Jonathan},
   title={Appendix to: ``Crossed products of UHF algebras by product type actions'' [Duke Math. J. {\textbf 46} (1979), no. 1, 1--23; MR0523598] by O. Bratteli},
   journal={Duke Math. J.},
   volume={46},
   date={1979},
   number={1},
   pages={25--26},
   issn={0012-7094},
   review={\MR{523599}},
}
%\bib{Sla72}{article}{
 %  author={Slawny, Joseph},
  % title={On factor representations and the $C^{\ast} $-algebra of
  % canonical commutation relations},
   %journal={Comm. Math. Phys.},
   %volume={24},
   %date={1972},
   %pages={151--170},
   %issn={0010-3616},
   %review={\MR{293942}},
%}

\bib{Wal95}{article}{
   author={Walters, Samuel G.},
   title={Projective modules over the non-commutative sphere},
   journal={J. London Math. Soc. (2)},
   volume={51},
   date={1995},
   number={3},
   pages={589--602},
   issn={0024-6107},
   review={\MR{1332894}},
   doi={10.1112/jlms/51.3.589},
}
\bib{Wal00}{article}{
   author={Walters, Samuel G.},
   title={Chern characters of Fourier modules},
   journal={Canad. J. Math.},
   volume={52},
   date={2000},
   number={3},
   pages={633--672},
   issn={0008-414X},
   review={\MR{1758235}},
   doi={10.4153/CJM-2000-028-9},
}
  
\end{biblist}
\end{bibdiv}
\end{document}